\newtheorem{theorem}{Theorem}[section]
\newtheorem{proposition}[theorem]{Proposition}
\newenvironment{proof}[1][Proof]{\begin{trivlist}
\item[\hskip \labelsep {\bfseries #1}]}{\end{trivlist}}
\newcommand{\qed}{\nobreak \ifvmode \relax \else
      \ifdim\lastskip<1.5em \hskip-\lastskip
      \hskip1.5em plus0em minus0.5em \fi \nobreak
      \vrule height0.75em width0.5em depth0.25em\fi}
\title{The GHS inequality for the Potts Model}
\author{S\'ergio de Carvalho Bezerra\\
\small{affiliation: Universidade Federal de Pernambuco}\\
\small{address: Av. Fernando Sim\~oes Barbosa 316-501}\\
\small{Boa Viagem-Recife-PE-Brazil, CEP 51021-060}\\
\small{email: sergio@de.ufpe.br}}
\begin{document}
\maketitle

\abstract{In this note we are concerned about the generalization of the GHS inequality for the Potts model. We also obtain by a different method the proof of the GHS inequality for the Ising model. We take advantage of a polynomial expansion and we derive that some of the variables of the polynomial can be separated.\\
\textbf{Keywords}: Potts model, GHS inequality and local magnetization. \\ \textbf{AMS}:60K35, 82B99}
\section{Introduction}
In Statistical Mechanics the Ising model is one of the most important model. One possible extension of the Ising model is the Potts one. An excellent review is \cite{W}. Recent works about the Potts model are \cite{CS},\cite{BR} and \cite{CS2}. The Potts model can be described as follows: Consider $N$ particles and for each particle $i$ we associate a number $\sigma_i\in\Sigma=\{1,2,\ldots,r\}$ called the spin of the particle $i$. $r$ is a positive integer bigger than one. The vector $\sigma=(\sigma_1,\ldots,\sigma_N)\in\Sigma^N$ receives the name of the system configuration. In the nineteenth century people started using one function to describe a physical system: the energy function. In the Potts Model the energy function is:
\begin{align}
E(\sigma)=\sum_{1\leq i<j\leq N} J_{i,j}\delta(\sigma_i,\sigma_j)+\sum_{i=1}^NB_i\delta(1,\sigma_i)
\end{align}
where the numbers $(J_{i,j})_{1\leq i<j\leq N},(B_i)_{1\leq i\leq N}$ will be consider non-negative real numbers and the function $\delta(x,y)=1$ if $x=y$ and zero otherwise. The $J_{i,j}$ represents the exchange interaction between the particles $i$ and $j$. As $J_{i,j}\geq 0$ we have only ferromagnetic interactions. The $B_i$ is the external field in the direction of the particle $i$. The probabilistic model is defined by the triple $(\Sigma^N,\mathcal{P}(\Sigma^N),\mathbb{P})$ where
\begin{align}
\mathbb{P}(\sigma)=\frac{e^{E(\sigma)}}{Z_N}
\end{align}
 the term of normalization $Z_N=\sum_{\sigma\in\Sigma^N}e^{E(\sigma)}$ is named partition function. We obtain the Ising model if we take $r=2$. Let $\underline{J}=(J_{1,2},\ldots,J_{N-1,N})$ and $\underline{B}=(B_1,\ldots,B_N)$, we define the local magnetization as:
 \begin{align}
 m_i(\underline{J},\underline{B})=<\delta(1,\sigma_i)>=\sum_{\sigma\in\Sigma^N}\delta(1,\sigma_i)\mathbb{P}(\sigma).
 \end{align}
   It was proved in \cite{GHS} which is called GHS inequality that
 \begin{theorem}\label{T:first} Let $r$ be equal to two. Then, for all $i,j,k\in\{1,\ldots,N\}$
   \begin{align}
   \frac{\partial m_i(\underline{J},\underline{B})}{\partial B_j\partial B_k}\leq 0.
   \end{align}
 \end{theorem}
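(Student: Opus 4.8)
The plan is to use the polynomial structure of $Z_N$ in the variables $b_l := e^{B_l}$. For $r = 2$ one has $Z_N = \sum_{A \subseteq \{1,\dots,N\}} c_A \prod_{l \in A} b_l$, a multilinear polynomial whose coefficients $c_A$ are strictly positive (each $c_A$ is a sum of products of the numbers $e^{J_{l,m}} \ge 1$). Since $m_i = \partial \log Z_N / \partial B_i$, the claim is equivalent to $\partial^3 \log Z_N / \partial B_i \partial B_j \partial B_k \le 0$. Collecting the monomials of $Z_N$ according to whether $i \in A$ gives $Z_N = P + b_i Q$ with $P, Q$ strictly positive polynomials in $(b_l)_{l \ne i}$, and hence
\[
m_i \;=\; \frac{b_i Q}{P + b_i Q} \;=\; \frac{e^{B_i}}{e^{B_i} + \rho}, \qquad \rho := \frac{P}{Q},
\]
where $\rho$ does not involve $B_i$ at all: the variable $B_i$ has been separated off, $m_i$ depending on it only through this explicit expression and on $B_j, B_k$ only through $\rho$. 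Up to a positive multiplicative constant, $\rho$ is the ratio of the two $(N-1)$-site ferromagnetic partition functions obtained by pinning $\sigma_i$ to its two values.

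I would then split into the three cases according to the multiplicities of $i$ among $j, k$, differentiating the displayed formula. This is routine calculus, and in each case the sign of $\partial^2 m_i / \partial B_j \partial B_k$ is dictated by a short list of auxiliary facts. When $j = k = i$ one finds $\partial^2 m_i / \partial B_i^2 = \rho\, e^{B_i}(\rho - e^{B_i})/(e^{B_i} + \rho)^3$, which is $\le 0$ as soon as $m_i \ge 1/2$, i.e.\ $e^{B_i} \ge \rho$; when exactly one of $j, k$ equals $i$ one gets $\partial^2 m_i/\partial B_i\partial B_k$ equal to $\partial_{B_k}\log\rho$ times a factor that is nonnegative once $e^{B_i} \ge \rho$; and when $i, j, k$ are distinct (or $j = k \ne i$) one is reduced, after writing the derivatives of $\rho$ in terms of those of $\log \rho$, to the single inequality
\[
(e^{B_i} + \rho)\,\frac{\partial^2 \log \rho}{\partial B_j \partial B_k} \;+\; (e^{B_i} - \rho)\,\frac{\partial \log \rho}{\partial B_j}\,\frac{\partial \log \rho}{\partial B_k} \;\ge\; 0 .
\]
Here $m_i \ge 1/2$ (hence $e^{B_i} \ge \rho$) is obtained from the symmetry $m_i = 1/2$ at $\underline B = 0$ together with monotonicity of $m_i$ in each field (Griffiths' second inequality, or FKG), and $\partial_{B_l}\log\rho \le 0$ from the same monotonicity applied to the two pinned systems, whose fields are componentwise ordered. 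Granting these, every term above has the right sign provided $\partial^2 \log \rho / \partial B_j \partial B_k \ge 0$.

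The step I expect to be the main obstacle is exactly this last inequality. Expanding, $\partial^2 \log \rho / \partial B_j \partial B_k$ equals the truncated two-point function of $\delta(1, \sigma_j)$ and $\delta(1, \sigma_k)$ in the ``$\sigma_i = 2$''-pinned subsystem minus the same quantity in the ``$\sigma_i = 1$''-pinned subsystem, and one must show the former dominates the latter. I would establish this by induction on $N$, integrating the GHS inequality for $N - 1$ sites along a path of external fields joining the two subsystems, the increments being the shifts $2 J_{i,l}$ produced by the pinning. The delicate point is that pinning $\sigma_i$ to the value $2$ shifts its neighbours' fields downward, so part of that path leaves the region where all fields are nonnegative and the inductive hypothesis does not apply verbatim there; dealing with this --- by strengthening the statement propagated through the induction, or by a separate comparison --- is where the real work lies. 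The base case $N = 1$ is immediate: $m_1 = e^{B_1}/(e^{B_1}+1)$ and $\partial^2 m_1 / \partial B_1^2 = e^{B_1}(1 - e^{B_1})/(e^{B_1}+1)^3 \le 0$ for $B_1 \ge 0$.
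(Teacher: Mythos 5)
Your reduction is sound as far as it goes: writing $Z_N=P+e^{B_i}Q$ with $P,Q$ positive and free of $B_i$, the formula $m_i=e^{B_i}/(e^{B_i}+\rho)$ with $\rho=P/Q$, the three-case calculus, and the final displayed inequality are all correct, and the auxiliary facts $e^{B_i}\ge\rho$ (from $m_i\ge 1/2$) and $\partial_{B_l}\log\rho\le 0$ do follow from spin-flip symmetry plus Griffiths II/FKG as you say. But the argument does not close, and you have located exactly where: everything rests on $\partial^2\log\rho/\partial B_j\partial B_k\ge 0$, i.e.\ on the truncated two-point function of the ``$\sigma_i=2$''-pinned system dominating that of the ``$\sigma_i=1$''-pinned system. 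This cannot be harvested from the inductive hypothesis by integration: in Ising variables the two pinned systems sit at field configurations $(B_l-J_{i,l})_l$ and $(B_l+J_{i,l})_l$, and the path joining them leaves the nonnegative orthant whenever some $B_l<J_{i,l}$, which is precisely where GHS for $N-1$ sites is unavailable (and where the concavity statement can fail for mixed-sign fields). The global spin-flip symmetry only makes the truncated correlation even under simultaneous reversal of \emph{all} fields, not coordinatewise, so it does not repair the path. As it stands the key inequality carries essentially the full strength of the theorem, so what you have is a correct reduction rather than a proof; to finish along these lines you would need an independent input of GHS type (e.g.\ the Ellis--Monroe or Lebowitz formulations), at which point the argument is circular.

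For contrast, the paper goes in the opposite direction: rather than separating the field variable $e^{B_i}$, it absorbs the fields into couplings to a ghost spin $\sigma_0$, expands $Z_N^3\,\partial^2 m_1/\partial B_2\partial B_3$ as a polynomial in the nonnegative variables $X_p=e^{J_p}-1$, proves a separation formula factoring out manifestly positive factors for every pair of sites except $(1,2),(1,3),(2,3)$, and then checks by direct computation that every coefficient of the residual three-variable polynomial is $\le 0$ when $r=2$ (and $\ge 0$ when $r\ge 3$). That brute-force coefficient check is exactly what replaces the correlation-inequality input your route still needs.
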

  Alternative proofs can be found in \cite{EM} or \cite{L}.  Naturally, we can ask for what does it happen with $r$ bigger than two? This works gives an answer to this question.
In fact, we prove:
\begin{theorem}\label{T:convexity}(GHS inequality for the potts model)Given non-negative real numbers $\underline{J}$ and $\underline{B}$ the local magnetization has the following properties
\begin{itemize}
\item[i)] if $r=2$ then
\begin{align}
 \frac{\partial^2 m_i(\underline{J},\underline{B})}{\partial B_j\partial B_k}&\leq 0
\end{align}
\item[ii)] if $r\geq 3$ then
\begin{align}
 \frac{\partial^2 m_i(\underline{J},\underline{B})}{\partial B_j\partial B_k}&\geq 0.
\end{align}
\end{itemize}
\end{theorem}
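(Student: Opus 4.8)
\bigskip
\noindent\emph{Sketch of a proof.} The plan is to pass to the variables $x_l=e^{B_l}$ and exploit multilinearity. Because $e^{B_l\delta(1,\sigma_l)}=x_l^{\delta(1,\sigma_l)}$ is affine in $x_l$, the partition function $Z_N$ is a multilinear polynomial in $(x_1,\dots,x_N)$ with non-negative coefficients (products of the factors $e^{J_{a,b}\delta(\sigma_a,\sigma_b)}\ge 1$). Since $\partial/\partial B_l=x_l\,\partial/\partial x_l$, one has $m_i=x_i(\partial Z_N/\partial x_i)/Z_N=\partial(\log Z_N)/\partial B_i$, hence
\[
\frac{\partial^2 m_i}{\partial B_j\partial B_k}=\frac{\partial^3\log Z_N}{\partial B_i\partial B_j\partial B_k},
\]
the third joint cumulant of $\delta(1,\sigma_i),\delta(1,\sigma_j),\delta(1,\sigma_k)$; in particular it is symmetric in $i,j,k$, so it suffices to treat $j,k\neq i$ and, separately, the diagonal cases.

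The first substantive step is to \emph{separate the variable $x_i$}. Permuting the colours $\{2,\dots,r\}$ fixes every weight $e^{J_{a,b}\delta(\sigma_a,\sigma_b)}$ and every $\delta(1,\sigma_a)$, so carrying out the sum over $\sigma_i$ yields
\[
Z_N=x_i\,Z^{(1)}+(r-1)\,Z^{(2)},
\]
where $Z^{(1)}$ (resp.\ $Z^{(2)}$) is the partition function of the $(N-1)$-site Potts model obtained by deleting site $i$ and adding, at each remaining site $a$, an extra field $J_{i,a}$ on colour $1$ (resp.\ on colour $2$); both are multilinear in $(x_l)_{l\neq i}$ with non-negative coefficients and neither depends on $x_i$. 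Therefore $m_i=x_iZ^{(1)}/\bigl(x_iZ^{(1)}+(r-1)Z^{(2)}\bigr)=\bigl(1+e^{-L}\bigr)^{-1}$ with $L:=\log x_i+\log Z^{(1)}-\log Z^{(2)}-\log(r-1)$.

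Now differentiate. The logistic map $t\mapsto(1+e^{-t})^{-1}$ has first and second derivatives $m(1-m)$ and $m(1-m)(1-2m)$, so for $j,k\neq i$
\[
\frac{\partial^2 m_i}{\partial B_j\partial B_k}=m_i(1-m_i)\Bigl[(1-2m_i)\,\frac{\partial L}{\partial B_j}\frac{\partial L}{\partial B_k}+\frac{\partial^2 L}{\partial B_j\partial B_k}\Bigr].
\]
Substituting $\partial L/\partial B_j=\partial\log(Z^{(1)}/Z^{(2)})/\partial B_j$, $1-2m_i=\bigl((r-1)Z^{(2)}-x_iZ^{(1)}\bigr)/\bigl((r-1)Z^{(2)}+x_iZ^{(1)}\bigr)$, and the identity $\partial^2(1/f)/\partial B_j\partial B_k=\bigl(2f_jf_k-ff_{jk}\bigr)/f^3$, and clearing the positive denominators, one obtains
\[
\mathrm{sign}\,\frac{\partial^2 m_i}{\partial B_j\partial B_k}
=\mathrm{sign}\Bigl[(r-1)\bigl(Z^{(2)}\bigr)^3\frac{\partial^2}{\partial B_j\partial B_k}\frac{Z^{(1)}}{Z^{(2)}}
-x_i\bigl(Z^{(1)}\bigr)^3\frac{\partial^2}{\partial B_j\partial B_k}\frac{Z^{(2)}}{Z^{(1)}}\Bigr].
\]
By the standard (Griffiths second) correlation inequalities for the ferromagnetic Potts model, $Z^{(1)}/Z^{(2)}$ is non-decreasing and $Z^{(2)}/Z^{(1)}$ non-increasing in each $B_l$; together with the $f\leftrightarrow 1/f$ identity above this pins the sign down completely once one knows the single inequality $\partial^2(Z^{(1)}/Z^{(2)})/\partial B_j\partial B_k\le 0$ when $r=2$ (which also yields a new route to Theorem~\ref{T:first}), respectively $\partial^2(Z^{(2)}/Z^{(1)})/\partial B_j\partial B_k\le 0$ when $r\ge 3$.

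This last inequality --- a ``GHS inequality for a ratio of two $(N-1)$-site partition functions'' --- is the step I expect to be the genuine obstacle, and I would again handle it by the polynomial expansion: $Z^{(1)}$ and $Z^{(2)}$ are multilinear in $x_j,x_k$ with non-negative coefficients which are moreover linked by the factors $e^{J_{j,k}},e^{J_{i,j}},e^{J_{i,k}}$, so one expands the relevant second derivative, reads off the coefficient of each monomial in the $x_l$'s, and checks that it has the required sign; this is exactly where the dichotomy $r=2$ versus $r\ge 3$ surfaces (through the number of colours available to a spin that is not $1$) and where the hypothesis $\underline B\ge 0$ must intervene (the inequality already fails for signed fields when $r=2$). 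Alternatively one may run an induction on $N$, with base case the explicit logistic formula $m_i=x_i/(x_i+r-1)$ at $N=1$. Finally I would treat the diagonal cases $j=i$ or $k=i$ separately; already the interaction-free case $J\equiv 0$, where $m_i=x_i/(x_i+r-1)$, gives $\partial^2 m_i/\partial B_i^2=x_i(r-1)(r-1-x_i)/(x_i+r-1)^3$, which is $\ge 0$ precisely when $x_i\le r-1$, so for $r\ge 3$ the assertion cannot hold for all $\underline B\ge 0$ and the final statement should be supplemented by an extra hypothesis on $\underline B$ (e.g.\ $e^{B_i}\le r-1$, equivalently $m_i\le\tfrac12$, in this case).
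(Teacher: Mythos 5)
Your reduction is clean and the algebra checks out: writing $Z_N=x_iZ^{(1)}+(r-1)Z^{(2)}$ with $x_i=e^{B_i}$ and $Z^{(1)},Z^{(2)}$ independent of $x_i$, and pushing the logistic formula through, one does land (for $j,k\neq i$) on
\[
\mathrm{sign}\,\frac{\partial^2 m_i}{\partial B_j\partial B_k}
=\mathrm{sign}\Bigl[\bigl(x_iZ^{(1)}+(r-1)Z^{(2)}\bigr)\,\partial_{jk}\bigl(Z^{(1)}/Z^{(2)}\bigr)-2x_iZ^{(2)}\,\partial_j\bigl(Z^{(1)}/Z^{(2)}\bigr)\,\partial_k\bigl(Z^{(1)}/Z^{(2)}\bigr)\Bigr],
\]
which is equivalent to your displayed sign identity, and the two cases do follow formally from $\partial_{jk}(Z^{(1)}/Z^{(2)})\le 0$ (for $r=2$) resp.\ $\partial_{jk}(Z^{(2)}/Z^{(1)})\le 0$ (for $r\ge 3$) together with monotonicity. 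But that single inequality is the entire content of the theorem, and you do not prove it: you only say you ``would again handle it by the polynomial expansion'' and ``check that each coefficient has the required sign.'' That check is precisely what is hard. It is not a routine multilinearity argument --- the ratio $Z^{(1)}/Z^{(2)}$ is not multilinear, its second derivative involves a difference of products of three partition functions, and deciding the signs of the resulting coefficients is exactly what occupies Sections 3--5 of the paper (the expansion in $X_p=e^{J_p}-1$, the separation formula that strips off all pairs except $(1,2),(1,3),(2,3)$, and the explicit tabulation of the twenty-odd coefficients $\alpha(x,y,z)$, each of which turns out to be a multiple of factors such as $r^2-3r+2=(r-1)(r-2)$ that switch sign exactly at $r=2$ versus $r\ge 3$). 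Invoking ``Griffiths second inequality for the Potts model'' for the monotonicity of $Z^{(1)}/Z^{(2)}$ is also not free: GKS-type inequalities are delicate for $r\ge 3$ and you would need to prove the version you use. As it stands the proposal is a (correct) reformulation of the problem, not a proof.

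Two further remarks. First, your route is genuinely different from the paper's: the paper never separates $B_i$; it introduces a ghost spin $\sigma_0$ with $J_{0,a}=B_a$, treats $I=Z_N^3\,\partial^2m_1/\partial B_2\partial B_3$ as a polynomial in all the variables $e^{J_{a,b}}-1$ simultaneously, and factors out every variable except the three attached to the pairs $(1,2),(1,3),(2,3)$; if you complete your programme, the coefficient check you defer would essentially reproduce that computation. Second, your observation about the diagonal case is correct and worth keeping: for $i=j=k$ one has $\partial^2m_i/\partial B_i^2=m_i(1-m_i)(1-2m_i)$, which for $r\ge 3$ is negative as soon as $m_i>\tfrac12$ (already at $N=1$ with $e^{B_i}>r-1$), so part (ii) can only be asserted for distinct $i,j,k$ --- which is indeed the only case the paper's proof addresses, since it fixes $i=1$, $j=2$, $k=3$.
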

Item i) is the theorem~\ref{T:first} which we obtain by a different method.

It was obtained in \cite{E} the proof of the concavity of the magnetization for a model of spins that are real-valued random variables. In the work \cite{EMN} they proved the GHS inequality for families of random variables which
arise in certain ferromagnetic models of statistical mechanics and quantum
field theory.

In the next section, we obtain an expression for the second derivative of the local magnetization. In the following section, we determine that this derivative can be seen as a polynomial. After, we verify that this polynomial can have almost all the variables separated. In the last section, we evaluate some coefficient and prove the GHS inequality for the Potts model.

\section{The second derivative of the local magnetization}
The first step is to evaluate the second derivative of the local magnetization.

\begin{proposition}\label{P:second} Let $r$ be a positive integer bigger than one.  $\underline{J}$ and $\underline{B}$ vectors with entries non-negative. Then, we have
\begin{align}
 &\frac{\partial^2 m_i(\underline{J},\underline{B})}{\partial B_j\partial B_k}=<\delta(1,\sigma_i)\delta(1,\sigma_j)\delta(1,\sigma_k)>-<\delta(1,\sigma_i)\delta(1,\sigma_k)><\delta(1,\sigma_j)>-\nonumber\\
 &-<\delta(1,\sigma_i)\delta(1,\sigma_j)><\delta(1,\sigma_k)>-<\delta(1,\sigma_k)\delta(1,\sigma_j)><\delta(1,\sigma_i)>+\nonumber\\&+2<\delta(1,\sigma_i)><\delta(1,\sigma_k)><\delta(1,\sigma_j)>.\nonumber
\end{align}
\end{proposition}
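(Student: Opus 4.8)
The plan is to reduce the statement to the elementary observation that differentiating the Boltzmann weight with respect to a field produces the corresponding spin variable. Since $E(\sigma)$ depends on $B_j$ only through the summand $B_j\delta(1,\sigma_j)$, we have $\partial e^{E(\sigma)}/\partial B_j=\delta(1,\sigma_j)e^{E(\sigma)}$, and because $\Sigma^N$ is finite every average is a finite sum of smooth functions with $Z_N>0$, so differentiation under the sum is legitimate. Using the quotient rule on $\langle f\rangle=\sum_{\sigma}f(\sigma)e^{E(\sigma)}/Z_N$ together with $\partial Z_N/\partial B_j=Z_N\langle\delta(1,\sigma_j)\rangle$, I would first record the basic identity
\[
\frac{\partial}{\partial B_j}\langle f\rangle=\langle f\,\delta(1,\sigma_j)\rangle-\langle f\rangle\,\langle\delta(1,\sigma_j)\rangle,
\]
valid for any observable $f:\Sigma^N\to\mathbb{R}$.

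Next I would apply this identity once with $f=\delta(1,\sigma_i)$ to obtain the first derivative
\[
\frac{\partial m_i(\underline{J},\underline{B})}{\partial B_k}=\langle\delta(1,\sigma_i)\delta(1,\sigma_k)\rangle-\langle\delta(1,\sigma_i)\rangle\langle\delta(1,\sigma_k)\rangle,
\]
and then differentiate this with respect to $B_j$, applying the product rule to the second term and the basic identity to each of the three averages $\langle\delta(1,\sigma_i)\delta(1,\sigma_k)\rangle$, $\langle\delta(1,\sigma_i)\rangle$ and $\langle\delta(1,\sigma_k)\rangle$ that occur. This produces six terms: the single third moment $\langle\delta(1,\sigma_i)\delta(1,\sigma_j)\delta(1,\sigma_k)\rangle$, three terms of the form (two-point average)$\times$(one-point average), and two copies of $\langle\delta(1,\sigma_i)\rangle\langle\delta(1,\sigma_j)\rangle\langle\delta(1,\sigma_k)\rangle$, which combine into the coefficient $2$ of the claimed formula.

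Collecting terms then gives exactly the asserted expression. I do not expect a genuine obstacle: the computation is mechanical and does not use the idempotency of $\delta$, so the indices $i,j,k$ may coincide or be distinct throughout. The only point needing attention is the bookkeeping in the last step — tracking the signs coming from the two minus signs in the first-derivative formula, checking that the third moment survives with coefficient $+1$, and confirming that the two cubic terms accumulate rather than cancel. This is organizational rather than conceptual.
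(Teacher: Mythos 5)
Your proposal is correct and follows essentially the same route as the paper: the paper likewise differentiates $\langle f\rangle$ twice using $\partial_{B_j}e^{E(\sigma)}=\delta(1,\sigma_j)e^{E(\sigma)}$ and the quotient rule, merely writing the resulting averages as explicit sums $\sum_{\sigma}\sum_{\hat\sigma}\cdots\mathbb{P}(\sigma)\mathbb{P}(\hat\sigma)$ rather than in bracket notation. Your bookkeeping of the five terms and the coefficient $2$ matches the paper's computation exactly.
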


\begin{proof}if we take the first derivative we have:
\begin{align}
\frac{\partial m_i(\underline{J},\underline{B})}{\partial B_k}=\sum_{\sigma\in\Sigma^N}\delta(1,\sigma_i)\delta(1,\sigma_k)\mathbb{P}(\sigma)-\sum_{\sigma\in\Sigma^N}\sum_{\hat\sigma\in\Sigma^N}\delta(1,\sigma_i)\delta(1,\hat\sigma_k)\mathbb{P}(\sigma)\mathbb{P}(\hat\sigma)\nonumber
\end{align}
Now, we take the second derivative:
\begin{align}
   \frac{\partial^2 m_i(\underline{J},\underline{B})}{\partial B_j\partial B_k}&=\sum_{\sigma\in\Sigma^N}\delta(1,\sigma_i)\delta(1,\sigma_k)\delta(1,\sigma_j)\mathbb{P}(\sigma)-\nonumber\\
&-\sum_{\sigma\in\Sigma^N}\sum_{\hat\sigma\in\Sigma^N}\delta(1,\sigma_i)\delta(1,\sigma_k)\delta(1,\hat\sigma_j)\mathbb{P}(\sigma)\mathbb{P}(\hat\sigma)\nonumber\\
&-\sum_{\sigma\in\Sigma^N}\sum_{\hat\sigma\in\Sigma^N}\delta(1,\sigma_i)\delta(1,\hat\sigma_k)\delta(1,\sigma_j)\mathbb{P}(\sigma)\mathbb{P}(\hat\sigma)\nonumber\\
&-\sum_{\sigma\in\Sigma^N}\sum_{\hat\sigma\in\Sigma^N}\delta(1,\sigma_i)\delta(1,\hat\sigma_k)\delta(1,\hat\sigma_j)\mathbb{P}(\sigma)\mathbb{P}(\hat\sigma)\nonumber\\
&+2\sum_{\sigma\in\Sigma^N}\sum_{\hat\sigma\in\Sigma^N}\sum_{\check\sigma\in\Sigma^N}\delta(1,\sigma_i)\delta(1,\hat\sigma_k)\delta(1,\check\sigma_j)\mathbb{P}(\sigma)\mathbb{P}\hat\sigma)\mathbb{P}(\check\sigma)\nonumber
\end{align}
\qed
\end{proof}
We pick up the idea introduced in \cite{GHS} and we consider a ghost spin $\sigma_0$. Consequently, we consider the energy as
\begin{align}
E(\sigma)=\sum_{1\leq i<j\leq N} J_{i,j}\delta(\sigma_i,\sigma_j)+\sum_{i=1}^NB_i\delta(\sigma_0,\sigma_i)=\sum_{0\leq i<j\leq N} J_{i,j}\delta(\sigma_i,\sigma_j)\nonumber
\end{align}
where $J_{0,i}=B_i$ and if we set up $i=1,j=2,k=3$ then the second derivative becomes:
\begin{align}
& \frac{\partial^2 m_1(\underline{J},\underline{B})}{\partial B_2\partial B_3}=<\delta(\sigma_0,\sigma_1)\delta(\sigma_0,\sigma_2)\delta(\sigma_0,\sigma_3)>-\nonumber\\&-<\delta(\sigma_0,\sigma_1)\delta(\sigma_0,\sigma_3)><\delta(\sigma_0,\sigma_2)>-
 <\delta(\sigma_0,\sigma_1)\delta(\sigma_0,\sigma_2)><\delta(\sigma_0,\sigma_3)>-\nonumber\\&-<\delta(\sigma_0,\sigma_3)\delta(\sigma_0,\sigma_2)><\delta(\sigma_0,\sigma_1)>+\nonumber\\&+2<\delta(\sigma_0,\sigma_1)><\delta(\sigma_0,\sigma_2)><\delta(\sigma_0,\sigma_3)>.\nonumber
\end{align}
We will study the signal of this last expression. By one remark in \cite{GHS} this signal implies the signal of the second derivative of the Proposition~\ref{P:second}.
\section{The recursive property of the second derivative signal}
We define
 $$H=H(\sigma)=\exp{(\displaystyle\sum_{0\leq i<j\leq N}J_{i,j}\delta(\sigma_i,\sigma_j))}.$$
 Thus, we pay attention for the following term:

\begin{align}\label{E:def_I}
&I(\underline{J},\underline{B})\colon= Z_N^3\frac{\partial^2 m_1(\underline{J},\underline{B})}{\partial B_2\partial B_3}=\sum_{\begin{array}{c}\sigma\end{array}}H\sum_{\begin{array}{c}\sigma\end{array}}H\sum_{\begin{array}{c}\sigma\\
\sigma_0=\sigma_1=\sigma_2=\sigma_3\end{array}}H-\nonumber\\&\sum_{\begin{array}{c}\sigma\end{array}}H\sum_{\begin{array}{c}\sigma\\\sigma_0=\sigma_1=\sigma_2\end{array}}H\sum_{\begin{array}{c}\sigma\\
\sigma_0=\sigma_3\end{array}}H-\nonumber\\&-\sum_{\begin{array}{c}\sigma\end{array}}H\sum_{\begin{array}{c}\sigma\\\sigma_0=\sigma_1=\sigma_3\end{array}}H\sum_{\begin{array}{c}\sigma\\
\sigma_0=\sigma_2\end{array}}H\nonumber\\&-\sum_{\begin{array}{c}\sigma\end{array}}H\sum_{\begin{array}{c}\sigma\\\sigma_0=\sigma_2=\sigma_3\end{array}}H\sum_{\begin{array}{c}\sigma\\
\sigma_0=\sigma_1\end{array}}H+\nonumber\\&+2\sum_{\begin{array}{c}\sigma\\\sigma_0=\sigma_1\end{array}}H\sum_{\begin{array}{c}\sigma\\\sigma_0=\sigma_2\end{array}}H\sum_{\begin{array}{c}\sigma\\
\sigma_0=\sigma_3\end{array}}H.
\end{align}

Given a positive integer $s$ let $\mathcal{A}_{s}$ be the set of matrix $A=(a_{i,j})$ with index $1\leq i\leq \binom{N+1}{2}$ and $ 1\leq j\leq 3$ where $a_{i,j}\in\{0,1\}$ if the index $i$ is bigger or equal to $\binom{N+1}{2}-s+1$ and less or equal than $\binom {N+1}{2}$.Further, $a_{i,j}=0$ otherwise. We consider that the pairs of particles are in a specific order $\mathcal{O}=\{(0,1),(0,2),\ldots,(N-1,N)\}=\{o_1,\ldots o_{\binom{N+1}{2}}\}$. The row $p$ in $A$ is associated to the pair of particles $o_p$.Let $H_{s}$ be $H_{s}(\sigma)=\exp{(\displaystyle\sum_{(i,j)\in \mbox{ the first }\binom{N+1}{2}-s\mbox{ terms in } \mathcal{O}}J_{i,j}\delta(\sigma_i,\sigma_j))}$. For each matrix $A\in \mathcal{A}_s$ we associate the term
$I_A=I_{1,A}+I_{2,A}-I_{3,A}-I_{4,A}+I_{5,A}$ with
\begin{align}\label{E:def_I12345}
&I_{1,A}=\big(\sum_{\begin{array}{c}\sigma\\\sigma_i=\sigma_j\mbox{ if }\\o_r=(i,j),a_{r,1}=1\end{array}}H_s\sum_{\begin{array}{c}\sigma\\\sigma_i=\sigma_j\mbox{ if }\\o_r=(i,j),a_{r,2}=1\end{array}}H_s\big)\times\nonumber\\&\times\sum_{\begin{array}{c}\sigma\\
\sigma_0=\sigma_1=\sigma_2=\sigma_3\\\sigma_i=\sigma_j\mbox{ if }\\o_r=(i,j),a_{r,3}=1\end{array}}H_s\nonumber
\end{align}
\begin{align}
I_{2,A}&=\big(\sum_{\begin{array}{c}\sigma\\\sigma_i=\sigma_j\mbox{ if }\\o_r=(i,j),a_{r,1}=1\end{array}}H_s\sum_{\begin{array}{c}\sigma\\\sigma_0=\sigma_1=\sigma_3\\\sigma_i=\sigma_j\mbox{ if }\\o_r=(i,j),a_{r,2}=1\end{array}}H_s\big)\times\nonumber\\&\times
\sum_{\begin{array}{c}\sigma\\
\sigma_0=\sigma_2\\\sigma_i=\sigma_j\mbox{ if }\\o_r=(i,j),a_{r,3}=1\end{array}}H_s\nonumber
\end{align}
\begin{align}
I_{3,A}&=\big(\sum_{\begin{array}{c}\sigma\\\sigma_i=\sigma_j\mbox{ if }\\o_r=(i,j),a_{r,1}=1\end{array}}H_s\sum_{\begin{array}{c}\sigma\\\sigma_0=\sigma_1=\sigma_3\\\sigma_i=\sigma_j\mbox{ if }\\o_r=(i,j),a_{r,2}=1\end{array}}H_s\big)\times\nonumber\\&\times\sum_{\begin{array}{c}\sigma\\
\sigma_0=\sigma_2\\\sigma_i=\sigma_j\mbox{ if }\\o_r=(i,j),a_{r,3}=1\end{array}}H_s\nonumber
\end{align}
\begin{align}
I_{4,A}&=\big(\sum_{\begin{array}{c}\sigma\\\sigma_i=\sigma_j\mbox{ if }\\o_r=(i,j),a_{r,1}=1\end{array}}H_s\sum_{\begin{array}{c}\sigma\\\sigma_0=\sigma_2=\sigma_3\\\sigma_i=\sigma_j\mbox{ if }\\o_r=(i,j),a_{r,2}=1\end{array}}H_s\big)\times\nonumber\\
&\times\sum_{\begin{array}{c}\sigma\\
\sigma_0=\sigma_1\\\sigma_i=\sigma_j\mbox{ if }\\o_r=(i,j),a_{r,3}=1\end{array}}H_s\nonumber
\end{align}
\begin{align}
I_{5,A}&=2\big(\sum_{\begin{array}{c}\sigma\\\sigma_0=\sigma_1\\\sigma_i=\sigma_j\mbox{ if }\\o_r=(i,j),a_{r,1}=1\end{array}}H_s\sum_{\begin{array}{c}\sigma\\\sigma_0=\sigma_2\\\sigma_i=\sigma_j\mbox{ if }\\o_r=(i,j),a_{r,2}=1\end{array}}H_s\big)\times\nonumber\\&\times\sum_{\begin{array}{c}\sigma\\
\sigma_0=\sigma_3\\\sigma_i=\sigma_j\mbox{ if }\\o_r=(i,j),a_{r,3}=1\end{array}}H_s.\nonumber
\end{align}
It means that we have many constraints of the type $\sigma_i=\sigma_j$ which are defined by the element $A$.
As For each row of $A$ we have a pair $o_p=(i,j)$ we consider its weight defined by $n(p)\colon=a(p,1)+a(p,2)+a(p,3)$ and also we designate by $J_p\colon=J_{i,j}$. Now, we can announce the following result which can be interpreted as $I(\underline{J},\underline{B})$ is a polynomial in the variables $X_p\colon=(e^{J_p}-1)$ with $p=1,\ldots,\binom{N+1}{2}$.
\begin{proposition}Let $\underline{J}$ and $\underline{B}$ be non-negative real numbers and $s$ be a positive integer that belongs to $\{1,\ldots,\binom{N+1}{2}\}$. Then
\begin{align}
I(\underline{J},\underline{B})=\sum_{A\in \mathcal{A}_s}I_AX_{\binom{N+1}{2}-s+1}^{n(\binom{N+1}{2}-s+1)}\ldots X_{\binom{N+1}{2}}^{n(\binom{N+1}{2})}
\end{align}
\end{proposition}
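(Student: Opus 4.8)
The plan is to prove the identity by a purely combinatorial expansion of the Boltzmann weight, with no analytic input; equivalently one may argue by induction on $s$, peeling off one variable at a time, the base case $s=0$ being the trivial statement that $\mathcal A_0=\{\mathbf 0\}$, $H_0=H$, $I_{\mathbf 0}=I$ and the (empty) monomial equals $1$.

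First I would record the elementary identity that makes the polynomial structure visible: since $\delta(\sigma_i,\sigma_j)\in\{0,1\}$ we have $e^{J_{i,j}\delta(\sigma_i,\sigma_j)}=1+(e^{J_{i,j}}-1)\,\delta(\sigma_i,\sigma_j)=1+X_p\,\delta(\sigma_{i_p},\sigma_{j_p})$ whenever $o_p=(i_p,j_p)$. Multiplying over all pairs and separating the last $s$ of them in the order $\mathcal O$ gives
\[
H=\prod_{p=1}^{\binom{N+1}{2}}\bigl(1+X_p\,\delta(\sigma_{i_p},\sigma_{j_p})\bigr)=H_s\cdot\!\!\prod_{p=\binom{N+1}{2}-s+1}^{\binom{N+1}{2}}\!\!\bigl(1+X_p\,\delta(\sigma_{i_p},\sigma_{j_p})\bigr),
\]
because $H_s$ is by definition the product over the first $\binom{N+1}{2}-s$ pairs. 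Next I would expand that last product over subsets and push the configuration sum inside: for any of the ``core'' constraint sets $C$ occurring in the defining expression for $I(\underline J,\underline B)$ (those of the form $\sigma_0=\sigma_1=\sigma_2=\sigma_3$, $\sigma_0=\sigma_2=\sigma_3$, $\sigma_0=\sigma_3$, or no constraint at all),
\[
\sum_{\sigma\in C}H=\sum_{T}\Bigl(\prod_{p\in T}X_p\Bigr)\sum_{\sigma\in C}H_s\prod_{p\in T}\delta(\sigma_{i_p},\sigma_{j_p})=\sum_{T}\Bigl(\prod_{p\in T}X_p\Bigr)\sum_{\substack{\sigma\in C\\ \sigma_{i_p}=\sigma_{j_p}\ (p\in T)}}\!\!H_s,
\]
the sum running over subsets $T$ of the last $s$ indices, since a product of $\delta$'s inside a configuration sum merely restricts the range of summation.

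Each of the five terms defining $I$ is a product of three such sums, so applying this expansion to all three factors turns each term into a sum over triples $(T_1,T_2,T_3)$ of subsets of $\{\binom{N+1}{2}-s+1,\dots,\binom{N+1}{2}\}$ of the quantity $\prod_p X_p^{\#\{j:\,p\in T_j\}}$ times the product of three restricted $H_s$-sums. Encoding the triple by the matrix $A$ with $a_{p,j}=1$ exactly when $p\in T_j$, the map $A\mapsto(T_1,T_2,T_3)$ is a bijection between $\mathcal A_s$ and all such triples, under which $\#\{j:p\in T_j\}=a_{p,1}+a_{p,2}+a_{p,3}=n(p)$ (which vanishes for $p\le\binom{N+1}{2}-s$, so those variables do not enter the monomial), and the three restricted $H_s$-sums are precisely the three factors of $I_{1,A},\dots,I_{5,A}$, the pre-existing core constraint of each factor being simply intersected with the new constraints. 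Summing over $A\in\mathcal A_s$ and forming the same signed combination of the five terms as in the definition of $I_A$ reassembles $I$ and produces the claimed formula. The whole argument is routine index bookkeeping; the only points that need care are (i) checking that the single bijection $A\leftrightarrow(T_1,T_2,T_3)$ matches, \emph{simultaneously for all five terms}, both the restricted summation ranges and the exponent vector $(n(p))_p$, and (ii) noting that $n(p)$ may be $2$ or $3$, so that $I$ is a genuine polynomial, not merely a multilinear form, in the last $s$ variables. If one prefers the inductive version, the induction step is exactly this expansion applied to the single pair $o_{\binom{N+1}{2}-s}$, using $H_s=H_{s+1}\bigl(1+X_{\binom{N+1}{2}-s}\,\delta(\sigma_i,\sigma_j)\bigr)$ with $o_{\binom{N+1}{2}-s}=(i,j)$.
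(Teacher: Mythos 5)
Your proposal is correct and is essentially the paper's argument: the paper proves the identity by induction on $s$, using at each step exactly your splitting identity $e^{J_p\delta(\sigma_i,\sigma_j)}=1+X_p\,\delta(\sigma_i,\sigma_j)$ (written there as separating the configuration sum into the parts with $\sigma_i=\sigma_j$ and $\sigma_i\neq\sigma_j$), together with the same encoding of the resulting constraint triples by the matrices $A$. Your one-shot expansion over triples of subsets is just the closed form of that induction, and you even supply the inductive version explicitly at the end, so there is nothing substantively different to compare.
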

A particular case in which we are interested is when $s=\binom{N+1}{2}$ we have \begin{align}\label{E:simplificada}
I(\underline{J},\underline{B})=\sum_{A\in A_{\binom{N+1}{2}}}I_AX_{1}^{n(1)}\ldots X_{\binom{N+1}{2}}^{n(\binom{N+1}{2})}
\end{align}
we remark that in this case $H_{\binom{N+1}{2}}=1$.
\begin{proof}
The proof is by induction on $s$. Let $s$ be equal to one. We observe that
\begin{align}\label{E:eq1}
\sum_{\begin{array}{c}\sigma\end{array}}H&=\sum_{\begin{array}{c}\sigma\\\sigma_{N-1}=\sigma_{N}\end{array}}H_1e^{J_{\binom{N+1}{2}}}+\sum_{\begin{array}{c}\sigma\\\sigma_{N-1}\neq\sigma_N\end{array}}H_1\nonumber\\&=
\sum_{\begin{array}{c}\sigma\\\sigma_{N-1}=\sigma_{N}\end{array}}H_1e^{J_{\binom{N+1}{2}}}+\sum_{\begin{array}{c}\sigma\end{array}}H_1-\sum_{\begin{array}{c}\sigma\\\sigma_{N-1}=\sigma_N\end{array}}H_1\nonumber\\
&=\sum_{\begin{array}{c}\sigma\\\sigma_{N-1}=\sigma_{N}\end{array}}H_1(e^{J_{\binom{N+1}{2}}}-1)+\sum_{\begin{array}{c}\sigma\end{array}}H_1.
\end{align}
In the second step we have just used the fact that $$\sum_{\begin{array}{c}\sigma\end{array}}H_1=\sum_{\begin{array}{c}\sigma\\\sigma_{N-1}=\sigma_N\end{array}}H_1+\sum_{\begin{array}{c}\sigma\\\sigma_{N-1}\neq\sigma_N\end{array}}H_1.$$
We use the notation $[i_1,\ldots,i_n]$ if $\sigma_{i_1}=\ldots=\sigma_{i_n}$.
Then, If we replace expression ~(\ref{E:eq1}) for each one of the elements in the equation~(\ref{E:def_I}) we obtain:
\begin{align}
I(\underline{J},\underline{B})=&\big(\sum_{\begin{array}{c}\sigma\\\, [N-1,N]\end{array}}H_1(e^{J_{\binom{N+1}{2}}}-1)+\sum_{\begin{array}{c}\sigma\end{array}}H_1\big)\times\nonumber\\&\times\big(\sum_{\begin{array}{c}\sigma\\\,[N-1,N]\end{array}}H_1(e^{J_{\binom{N+1}{2}}}-1)+\sum_{\begin{array}{c}\sigma\end{array}}H_1\big)\times\nonumber
\end{align}
\begin{align}
\hspace{3cm}
&\times\big(\sum_{\begin{array}{c}\sigma\\\,[0,1,2,3]\\\,[N-1,N]\end{array}}H_1(e^{J_{\binom{N+1}{2}}}-1)+\sum_{\begin{array}{c}\sigma\\\,[0,1,2,3]\end{array}}H_1\big)-\nonumber
\end{align}
\begin{align}
\hspace{3cm}
&\big(\sum_{\begin{array}{c}\sigma\\\,[N-1,N]\end{array}}H_1(e^{J_{\binom{N+1}{2}}}-1)+\sum_{\begin{array}{c}\sigma\end{array}}H_1\big)\times\nonumber
\end{align}
\begin{align}
\hspace{3cm}
&\times\big(\sum_{\begin{array}{c}\sigma\\\,[0,1,2]\\\,[N-1,N]\end{array}}H_1(e^{J_{\binom{N+1}{2}}}-1)+\sum_{\begin{array}{c}\sigma\\\,[0,1,2]\end{array}}H_1\big)\times\nonumber
\end{align}
\begin{align}
\hspace{3cm}
&\times\big(\sum_{\begin{array}{c}\sigma\\\,[0,3]\\\,[N-1,N]\end{array}}H_1(e^{J_{\binom{N+1}{2}}}-1)+\sum_{\begin{array}{c}\sigma\\\,[0,3]\end{array}}H_1\big)-\nonumber
\end{align}
\begin{align}
\hspace{3cm}
&\big(\sum_{\begin{array}{c}\sigma\\\,[N-1,N]\end{array}}H_1(e^{J_{\binom{N+1}{2}}}-1)+\sum_{\begin{array}{c}\sigma\end{array}}H_1\big)\times\nonumber
\end{align}
\begin{align}
\hspace{3cm}
&\times\big(\sum_{\begin{array}{c}\sigma\\\,[0,1,3]\\\,[N-1,N]\end{array}}H_1(e^{J_{\binom{N+1}{2}}}-1)+\sum_{\begin{array}{c}\sigma\\\,[0,1,3]\end{array}}H_1\big)\times\nonumber
\end{align}
\begin{align}
\hspace{3cm}
&\times\big(\sum_{\begin{array}{c}\sigma\\\,[0,2]\\\,[N-1,N]\end{array}}H_1(e^{J_{\binom{N+1}{2}}}-1)+\sum_{\begin{array}{c}\sigma\\\,[0,2]\end{array}}H_1\big)-\nonumber
\end{align}
\begin{align}
\hspace{3cm}
&\big(\sum_{\begin{array}{c}\sigma\\\,[N-1,N]\end{array}}H_1(e^{J_{\binom{N+1}{2}}}-1)+\sum_{\begin{array}{c}\sigma\end{array}}H_1\big)\times\nonumber
\end{align}
\begin{align}
\hspace{3cm}
&\times\big(\sum_{\begin{array}{c}\sigma\\\,[0,2,3]\\\,[N-1,N]\end{array}}H_1(e^{J_{\binom{N+1}{2}}}-1)+\sum_{\begin{array}{c}\sigma\\\,[0,2,3]\end{array}}H_1\big)\times\nonumber
\end{align}
\begin{align}
\hspace{3cm}
&\times\big(\sum_{\begin{array}{c}\sigma\\\,[0,1]\\\,[N-1,N]\end{array}}H_1(e^{J_{\binom{N+1}{2}}}-1)+\sum_{\begin{array}{c}\sigma\\\,[0,1]\end{array}}H_1\big)+\nonumber
\end{align}
\begin{align}
\hspace{3cm}
&+2\big(\sum_{\begin{array}{c}\sigma\\\,[0,1]\\\,[N-1,N]\end{array}}H_1(e^{J_{\binom{N+1}{2}}}-1)+\sum_{\begin{array}{c}\sigma\\,[0,1]\end{array}}H_1\big)\times\nonumber
\end{align}
\begin{align}
\hspace{3cm}
&\times\big(\sum_{\begin{array}{c}\sigma\\\,[0,2]\\\,[N-1,N]\end{array}}H_1(e^{J_{\binom{N+1}{2}}}-1)+\sum_{\begin{array}{c}\sigma\\\,[0,2]\end{array}}H_1\big)\times\nonumber
\end{align}
\begin{align}
\hspace{3cm}
&\times\big(\sum_{\begin{array}{c}\sigma\\\,[0,3]\\\,[N-1,N]\end{array}}H_1(e^{J_{\binom{N+1}{2}}}-1)+\sum_{\begin{array}{c}\sigma\\\,[0,3]\end{array}}H_1\big).\nonumber
\end{align}
Thus, the result follows. Now, we consider that we proved the formula for a positive integer $s$. That means,
\begin{align}
I(\underline{J},\underline{B})=\sum_{A\in \mathcal{A}_s}I_AX_{\binom{N+1}{2}-s+1}^{n(\binom{N+1}{2}-s+1)}\ldots X_{\binom{N+1}{2}}^{n(\binom{N+1}{2})}.\label{E:def_Is}
\end{align}
Let $p=\binom{N}{2}-s-1$ with $o_{p}=(i,j)$. Then,
\begin{align}
H_s=H_{s+1}e^{J_p\delta(\sigma_i,\sigma_j)}
\end{align}
we have that
\begin{align}
\sum_{\begin{array}{c}\sigma\\\mbox{condition on the }\sigma_k\mbox{'s}\end{array}}H_s&=\sum_{\begin{array}{c}\sigma\\\mbox{condition on the }\sigma_k\mbox{'s}\\\sigma_i=\sigma_j\end{array}}H_{s+1}e^{J_p}+\nonumber\\&+\sum_{\begin{array}{c}\sigma\\\mbox{condition on the }\sigma_k\mbox{'s}\\\sigma_i\neq\sigma_j\end{array}}H_{s+1}\nonumber\\
&=\sum_{\begin{array}{c}\sigma\\\mbox{condition on the }\sigma_k\mbox{'s}\\\sigma_i=\sigma_j\end{array}}H_{s+1}(e^{J_p}-1)+\nonumber\\&+\sum_{\begin{array}{c}\sigma\\\mbox{condition on the }\sigma_k\mbox{'s}\end{array}}H_{s+1}.\label{E:eq2}
\end{align}
In the last step, we used the fact that
\begin{align}
\sum_{\begin{array}{c}\sigma\\\mbox{condition on the }\sigma_k\mbox{'s}\\\sigma_i\neq\sigma_j\end{array}}H_{s+1}&=\sum_{\begin{array}{c}\sigma\\\mbox{condition on the }\sigma_k\mbox{'s}\end{array}}H_{s+1}-\nonumber\\&-\sum_{\begin{array}{c}\sigma\\\mbox{condition on the }\sigma_k\mbox{'s}\\\sigma_i=\sigma_j\end{array}}H_{s+1}.\nonumber
\end{align}
If we use expression~(\ref{E:eq2}) in the defintion of the term $I_A$ and we replace the result in the formula~(\ref{E:def_Is}) we prove the proposition.
\qed
\end{proof}
\section{The separation formula}

At this moment, we obtain the separation formula of the variable $X_i$'s. Let $O_1=O_2\cup O_3$ where $O_2=\{(1,2),(1,3),(2,3)\}$ and the set $O_3$ be equal to $\{(0,1),(0,2),(0,3)\}$. The pair of particles of $O_2$ will take a particular importance, we denote them by $p_1=(1,2),p_2=(1,3),p_3=(2,3)$.
\begin{proposition}(Separation formula)
\begin{align}
I(\underline{J},\underline{B})&=\prod_{\begin{array}{c}p=1\\o_p\notin O_1 \end{array}}^{\binom{N+1}{2}}(1+r^{-1}X_p)^3\times\nonumber\\&\times\prod_{\begin{array}{c}p=1\\o_p\in O_3 \end{array}}^{\binom{N+1}{2}}(1+(1+2r^{-1})X_p+(2r^{-1}+r^{-2})X_p^2+r^{-2}X_p^3)\times\nonumber\\&\times\sum_{\begin{array}{c}A\in \mathcal{A}_{\binom{N+1}{2}}\\\mbox{if }o_p\notin O_2\\a(p,1)=a(p,2)=a(p,3)=0\end{array}}I_AX_{p_1}^{n(p_1)}X_{p_2}^{n(p_2)}X_{p_3}^{n(p_3)}.\nonumber
\end{align}
\end{proposition}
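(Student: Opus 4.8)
The plan is to read the polynomial expansion of the previous section, taken at $s=\binom{N+1}{2}$ so that $H_s\equiv 1$, as a random‑cluster identity, and then to remove the variables $X_p$ with $o_p\notin O_2$ in two passes: first the \emph{generic} edges $o_p\notin O_1$, then the three ghost edges $o_p\in O_3$. With $H_s\equiv 1$ every factor occurring in $I_{k,A}$ is merely the number of $\sigma\in\Sigma^{N+1}$ obeying the prescribed equalities, i.e.\ $r^{\,\mathrm{comp}(\cdot)}$ where $\mathrm{comp}(\cdot)$ counts the connected components of the associated constraint graph on $\{0,1,\dots,N\}$. Since each $I_{k,A}$ is a product over the three columns of $A$ and $\prod_p X_p^{n(p)}=\prod_{c=1}^{3}\prod_p X_p^{a(p,c)}$, the sum over $A\in\mathcal A_{\binom{N+1}{2}}$ splits over columns, so that
\[
I(\underline J,\underline B)=\sum_{k}\varepsilon_k\prod_{c=1}^{3}G\bigl(F_c^{(k)}\bigr),\qquad
G(F):=\sum_{T\subseteq\mathcal O}r^{\,\mathrm{comp}(T\cup F)}\prod_{o_p\in T}X_p,
\]
with weights $(\varepsilon_k)_{k=1}^{5}=(1,-1,-1,-1,2)$ and $F_c^{(k)}$ the equality pattern among $\{0,1,2,3\}$ carried by the $c$-th factor of the $k$-th summand of $I$. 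Writing $r^{\,\mathrm{comp}(H)}$ as the number of colourings $\{0,\dots,N\}\to\{1,\dots,r\}$ constant on the components of $H$ turns this into $G(F)=\sum_{\phi\ \text{const.\ on }F}\ \prod_{o_p=(i,j)}(1+X_p)^{\delta(\phi_i,\phi_j)}$.

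To peel the generic edges set $V_0=\{0,1,2,3\}$; every $F_c^{(k)}$ lives on $V_0$, so after interchanging the three colour‑sums with the GHS combination one uses that the kernel $K(\alpha,\beta,\gamma)=\sum_k\varepsilon_k\,\mathbf 1[\alpha\ \text{obeys}\ F_1^{(k)}]\,\mathbf 1[\beta\ \text{obeys}\ F_2^{(k)}]\,\mathbf 1[\gamma\ \text{obeys}\ F_3^{(k)}]$ is a signed sum of products of three \emph{single‑column} indicators. This exhibits $I$ as a sum over the colours of $\{4,\dots,N\}$ of products of three reduced weights on $V_0$, and the claim reduces to showing that, once these colours are summed out, the dependence on the variables $X_p$ with $o_p\notin O_1$ collapses to the factor $\prod_{o_p\notin O_1}(1+r^{-1}X_p)^3$, the leftover being the same GHS quantity for the $4$-vertex ghost‑model on $V_0$ (the now‑free spins $\sigma_4,\dots,\sigma_N$ supplying the power of $r$ already present in the fully expanded $I_A$). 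I would prove this identity by induction on the set of generic edges, using the one‑variable recursion of the previous section as the inductive step, and checking that across all five GHS terms and all three columns the contributions to any fixed power of $X_p$ reassemble into the binomial $(1+r^{-1}X_p)^3$ — that is, that inserting a generic edge changes each relevant component count by $0$ or by $-1$, with signs that combine correctly.

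The second pass applies the same mechanism to the $4$-vertex model on $\{0,1,2,3\}$ and to the edges $O_3=\{(0,1),(0,2),(0,3)\}$. Here the ghost vertex $0$ belongs to every $F_c^{(k)}$, and this asymmetry replaces the binomial $(1+r^{-1}X_p)^3$ by $(1+r^{-1}X_p)^2(1+X_p)=1+(1+2r^{-1})X_p+(2r^{-1}+r^{-2})X_p^2+r^{-2}X_p^3$ for each $o_p\in O_3$, and leaves precisely the sum over matrices supported on the three rows of $O_2$, namely $\sum_A I_A X_{p_1}^{n(p_1)}X_{p_2}^{n(p_2)}X_{p_3}^{n(p_3)}$. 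Multiplying the outputs of the two passes gives the formula.

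The real difficulty is the cancellation in the first pass: for $N\ge 5$ the generic edges include edges within $\{4,\dots,N\}$, so the colours being summed out are coupled, and the product $\prod_{o_p\notin O_1}(1+r^{-1}X_p)^3$ cannot appear factor by factor — it emerges only through the precise weights $+1,-1,-1,-1,+2$. Carrying out the bookkeeping of $\mathrm{comp}(T\cup F)$ under edge insertions across the five terms is therefore the heart of the argument; once it is in place, the second pass is a lighter version of the same computation.
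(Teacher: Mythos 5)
Your strategy is the same as the paper's: work at $s=\binom{N+1}{2}$ so that every factor of $I_{k,A}$ is a colouring count $r^{\mathrm{comp}(T_c\cup F_c^{(k)})}$, then peel the variables $X_p$ with $o_p\notin O_2$ by grouping the eight matrices sharing a given $\hat A$, generic edges first and ghost edges second (your factorisation $(1+r^{-1}X_p)^2(1+X_p)$ of the ghost cubic is a nice check). You also correctly locate the critical step: summing over the placements of a generic edge among the three columns must yield $(1+r^{-1}X_p)^3$, and this cannot hold factor by factor once the peeled edges close a cycle, since then $\mathrm{comp}$ does not drop by one per inserted edge. But your hope that the weights $(+1,-1,-1,-1,+2)$ repair this is not realised, and the step you defer is exactly where the argument breaks. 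The defect is a \emph{common} factor of all five GHS terms: whether an inserted edge $(i,j)$ with $i,j\geq 1$ is redundant in a column depends only on the edges already placed in that column, not on which $F_c^{(k)}$ is attached (the $F_c^{(k)}$ only merge vertices of $\{0,1,2,3\}$ further and never change whether $i$ and $j$ are already joined), so the obstruction multiplies the whole signed combination $\sum_k\varepsilon_k\prod_c r^{\mathrm{comp}(\cdot)}$ and survives the cancellation. Note also that the coupling already bites at $N=4$, through cycles formed with the $O_2$ edges, not only at $N\geq 5$.

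Concretely, take $N=4$ and the monomial $X_{(1,2)}^3X_{(1,3)}^3X_{(1,4)}X_{(2,4)}$. A contributing matrix places $(1,4)$ in column $c_1$ and $(2,4)$ in column $c_2$, with rows $(1,2)$ and $(1,3)$ all ones. Since every column already connects $1$ and $2$, the second of the two edges $(1,4),(2,4)$ is redundant whenever $c_1=c_2$, so $I_A$ equals $r^{-1}I_{A_0}$ or $r^{-2}I_{A_0}$ according as $c_1=c_2$ or not, where $I_{A_0}=\alpha(3,3,0)=r^{3n-6}(r-1)(r-2)$. Summing the nine placements gives the true coefficient $(3r^{-1}+6r^{-2})\,\alpha(3,3,0)$, whereas the separation formula demands $3r^{-1}\cdot 3r^{-1}\cdot\alpha(3,3,0)=9r^{-2}\alpha(3,3,0)$; these differ by $3r^{-2}(r-1)\alpha(3,3,0)\neq 0$ for $r\geq 3$. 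So the inductive step you postpone ("inserting a generic edge changes each relevant component count by $0$ or by $-1$, with signs that combine correctly") cannot be carried out as stated. The paper's own proof has the same hole --- it asserts that $\sigma_i=\sigma_j$ is always a "new" constraint because $(i,j)\notin O_1$, overlooking redundancies created by the other rows of $A$ --- and your proposal, rather than filling it, explicitly defers it.
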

\begin{proof}
We start by the expression~(\ref{E:simplificada}). Given $A$ we define $\hat A$ as the matrix with entries equal to the entries of $A$ in all rows except in row $p$. At row $p$ we have $\hat a(p,1)=\hat a(p,2)=\hat a(p,3)=0$.

 We consider the case that $p\notin O_1$. Let $n(p)$ be the weight of the row $p$ of $A$. We observe the following relations, if $n(p)=0$ then $I_A=I_{\hat A}$. In the case that $n(p)=1$ then $I_A=r^{-1}I_{\hat A}$. For $n(p)=2$ we have $I_A=r^{-2}I_{\hat A}$. Finally, when $n(p)=3$ we get $I_A=r^{-3}I_{\hat A}$. We do the calculations for $n(p)=1$. let $p$ be the pair $(i,j)$ then $n(p)=1$ means that appears the condition $\sigma_i=\sigma_j$ in one term of $I_{1,A},I_{2,A},I_{3,A},I_{4,A}$ and $I_{5,A}$ defined in (\ref{E:def_I12345}). Furthermore, this is the only additional constraint if we compare the conditions implied by $A$ and $\hat A$. These constraints define a partition of the elements $\{0,1,\ldots,N\}$ where two elements $\ell,\hat\ell$ belong to the same subset if $\sigma_{\ell}=\sigma_{\hat\ell}$. When we sum up $$\displaystyle\sum_{\begin{array}{c}\sigma\\\mbox{ some constraints of the type}\\\sigma_{\ell}=\sigma_{\hat\ell}\end{array}}1$$ we obtain $r^{S}$ where $S$ is the number of subsets of the partition defined by the constraints. In the case that we are considering, we have one additional constraint if we compare $A$ with $\hat A$. Thus, the partition implied by the constraints of $A$ has the number of subsets of the partition implied by $\hat A$ less one. Further, as $(i,j)\notin O_1$ the constraint $\sigma_i=\sigma_j$ is a new constraint. It is not an existent condition as $\sigma_0=\sigma_1=\sigma_2=\sigma_3$ in $I_{1,A}$. Thus, $I_A=r^{-1}I_{\hat A}$.
  We observe that there exists eight elements belong to $\mathcal{A}_{\binom{N+1}{2}}$ that we associate the same $\hat A$. They differ one from each other by the possible values of $a(p,1),a(p,2)$ and $a(p,3)$. We have one element with $n(p)=0$, three elements with $n(p)=1$, three elements with $n(p)=2$ and one element with $n(p)=3$. Hence, the formula
  \begin{align}
  I(\underline{J},\underline{B})&=\sum_{\begin{array}{c}A\in\mathcal{A}_{\binom{N+1}{2}}\\n(p)=0\end{array}}I_{A}X_{1}^{n(1)}\ldots X_{p}^{0}\ldots X_{\binom{N+1}{2}}^{n(\binom{N+1}{2})}+\nonumber\\&+\sum_{\begin{array}{c}A\in\mathcal{A}_{\binom{N+1}{2}}\\n(p)=1\end{array}}I_{A}X_{1}^{n(1)}\ldots X_{p}^{1}\ldots X_{\binom{N+1}{2}}^{n(\binom{N+1}{2})}+\nonumber\\&+\sum_{\begin{array}{c}A\in\mathcal{A}_{\binom{N+1}{2}}\\n(p)=2\end{array}}I_{A}X_{1}^{n(1)}\ldots X_{p}^{2}\ldots X_{\binom{N+1}{2}}^{n(\binom{N+1}{2})}+\nonumber\\&+\sum_{\begin{array}{c}A\in\mathcal{A}_{\binom{N+1}{2}}\\n(p)=3\end{array}}I_{A}X_{1}^{n(1)}\ldots X_{p}^{3}\ldots X_{\binom{N+1}{2}}^{n(\binom{N+1}{2})}\nonumber
  \end{align}
  can be simplified
  \begin{align}
    I(\underline{J},\underline{B})&=
  \sum_{\begin{array}{c}A\in\mathcal{A}_{\binom{N+1}{2}}\\n(p)=0\end{array}}I_{A}X_{1}^{n(1)}\ldots \hat X_{p}\ldots X_{\binom{N+1}{2}}^{n(\binom{N+1}{2})}+\nonumber\\&+3r^{-1}\sum_{\begin{array}{c}A\in\mathcal{A}_{\binom{N+1}{2}}\\n(p)=0\end{array}}I_{A}X_{1}^{n(1)}\ldots \hat X_{p}\ldots X_{\binom{N+1}{2}}^{n(\binom{N+1}{2})}X_{p}+\nonumber\\&+3r^{-2}\sum_{\begin{array}{c}A\in\mathcal{A}_{\binom{N+1}{2}}\\n(p)=0\end{array}}I_{A}X_{1}^{n(1)}\ldots \hat X_{p}\ldots X_{\binom{N+1}{2}}^{n(\binom{N+1}{2})}X_{p}^{2}+\nonumber\\&+\sum_{\begin{array}{c}A\in\mathcal{A}_{\binom{N+1}{2}}\\n(p)=0\end{array}}I_{A}X_{1}^{n(1)}\ldots \hat X_{p}\ldots X_{\binom{N+1}{2}}^{n(\binom{N+1}{2})}X_{p}^{3}\nonumber
  \end{align}
  where $\hat X$ means that this term does not appear. With more simplifications, we obtain
  \begin{align}
    I(\underline{J},\underline{B})&=
    \sum_{\begin{array}{c}A\in\mathcal{A}_{\binom{N+1}{2}}\\n(p)=0\end{array}}I_{A}X_{1}^{n(1)}\ldots \hat X_{p}\ldots X_{\binom{N+1}{2}}^{n(\binom{N+1}{2})}(1+3r^{-1}X_p+\nonumber\\&+3r^{-2}X_p^2+r^{-3}X_p^3)\nonumber\\&
    =  \sum_{\begin{array}{c}A\in\mathcal{A}_{\binom{N+1}{2}}\\n(p)=0\end{array}}I_{A}X_{1}^{n(1)}\ldots \hat X_{p}\ldots X_{\binom{N+1}{2}}^{n(\binom{N+1}{2})}(1+r^{-1}X_p)^3.\nonumber
  \end{align}
We can repeat the procedure for all $p\notin O_3$ which produces
\begin{align}
    I(\underline{J},\underline{B})&=\prod_{\begin{array}{c}p=1\\o_p\notin O_1 \end{array}}^{\binom{N+1}{2}}(1+r^{-1}X_p)^3\times\nonumber\\&\times\sum_{\begin{array}{c}A\in \mathcal{A}_{\binom{N+1}{2}}\\\mbox{if }o_p\notin O_3\\a(p,1)=a(p,2)=a(p,3)=0\end{array}}I_A\prod_{\begin{array}{c}p=1\\o_p\in O_3\end{array}}^{\binom{N+1}{2}}X_{p}^{n(p)}.\label{E:parcial}
\end{align}
Now, we pay attention to the terms that belong to $O_1$. Let $p_4$ be $(0,1)$. For a matrix $A\in\mathcal{A}_{\binom{N+1}{2}}$ with entries $a(p,1)=a(p,2)=a(p,3)=0$ for all $p\notin O_3$ we associate the matrix $\hat A$ with the same entries except for row $p_4$ we have $\hat a(p,1)=\hat a(p,2)=\hat a(p,3)=0$. We can show that if $n(p_4)=0$ then we have $I_A=I_{\hat A}$. Also, we obtain
\begin{align}
&\sum_{\begin{array}{c}A\in\mathcal{A}_{\binom{N+1}{2}}\\n(p_4)=1\\n(p)=0\forall p\in O_3\end{array}}I_AX_{p_4}\prod_{\begin{array}{c}p=1\\o_p\in O_3\\p\neq p_4\end{array}}^{\binom{N+1}{2}}X_{p}^{n(p)} \nonumber\\&=(1+2r^{-1})X_{p_4}\sum_{\begin{array}{c}A\in\mathcal{A}_{\binom{N+1}{2}}\\n(p_4)=0\\n(p)=0\forall p\in O_3\end{array}}I_A\prod_{\begin{array}{c}p=1\\o_p\in O_3\\p\neq p_4\end{array}}^{\binom{N+1}{2}}X_{p}^{n(p)}.\label{E:eq3}
\end{align}
For $n(p_4)=2$ we have
\begin{align}
&\sum_{\begin{array}{c}A\in\mathcal{A}_{\binom{N+1}{2}}\\n(p_4)=2\\n(p)=0\forall p\in O_3\end{array}}I_AX_{p_4}^2\prod_{\begin{array}{c}p=1\\o_p\in O_3\\p\neq p_4\end{array}}^{\binom{N+1}{2}}X_{p}^{n(p)} \nonumber\\&=(2r^{-1}+r^{-2})X_{p_4}^2\sum_{\begin{array}{c}A\in\mathcal{A}_{\binom{N+1}{2}}\\n(p_4)=0\\n(p)=0\forall p\in O_3\end{array}}I_A\prod_{\begin{array}{c}p=1\\o_p\in O_3\\p\neq p_4\end{array}}^{\binom{N+1}{2}}X_{p}^{n(p)}.\label{E:eq4}
\end{align}
And if $n(p_4)=3$ we get $I_{A}=r^{-2}I_{\hat A}$.
Again, we have eight matrix $A$ which are associated for the same $\hat A$. We consider the three matrix with $n(p_4)=1$, i.e., the matrix $A_1$ with $a(p_4,1)=1$ the matrix $A_2$ with $a(p_4,2)=1$ and $A_3$ with $a(p_4,3)=1$. We have that $I_{1,A_1}=r^{-1}I_{1,\hat A}$,$I_{2,A_1}=r^{-1}I_{2,\hat A}$,$I_{3,A_1}=r^{-1}I_{3,\hat A}$,$I_{4,A_1}=r^{-1}I_{4,\hat A}$ and $I_{5,A_1}=I_{5,\hat A}$. In the last term does not appear the factor $r^{-1}$ because the condition $\sigma_0=\sigma_1$ appears independently of the matrix $A_1$. We also obtain that $I_{1,A_2}=r^{-1}I_{1,\hat A}$,$I_{2,A_2}=I_{2,\hat A}$,$I_{3,A_2}=I_{3,\hat A}$,$I_{4,A_2}=r^{-1}I_{4,\hat A}$ and $I_{5,A_2}=r^{-1}I_{5,\hat A}$. There are two terms that do not appear $r^{-1}$ it happens because again the condition $\sigma_0=\sigma_1$ occurs independently of the matrix $A_2$. For $A_3$ we have $I_{1,A_3}=I_{1,\hat A}$,$I_{2,A_3}=r^{-1}I_{2,\hat A}$,$I_{3,A_3}=r^{-1}I_{3,\hat A}$,$I_{4,A_3}=I_{4,\hat A}$ and $I_{5,A_3}=r^{-1}I_{5,\hat A}$. Again the terms that do not appear $r^{-1}$ are consequence of the constraint $\sigma_0=\sigma_1$ happens independently of the matrix $A_3$.  Besides, equations~(\ref{E:eq3}) is a consequence of these facts. Now we look at the three matrices with $n(p)=2$ let $A_4$ be the matrix with $a(p_4,1)=a(p_4,2)=1$ $A_5$ be the matrix with $a(p_4,1)=a(p_4,3)=1$ and $A_5$ the one with $a(p_4,2)=a(p_4,3)=1$. We obtain that $I_{1,A_4}=r^{-2}I_{1,\hat A}$,$I_{2,A_4}=r^{-1}I_{2,\hat A}$,$I_{3,A_4}=r^{-1}I_{3,\hat A}$,$I_{4,A_4}=r^{-2}I_{4,\hat A}$ and $I_{5,A_4}=r^{-1}I_{5,\hat A}$. The terms $r^{-1}$ appear when one of the conditions $\sigma_0=\sigma_1$ already occurs independently of $A_4$ and the term $r^{-2}$ appears when both conditions $\sigma_0=\sigma_1$ are new. For the matrix $A_5$ we obtain that $I_{1,A_5}=r^{-1}I_{1,\hat A}$,$I_{2,A_5}=r^{-2}I_{2,\hat A}$,$I_{3,A_5}=r^{-2}I_{3,\hat A}$,$I_{4,A_5}=r^{-1}I_{4,\hat A}$ and $I_{5,A_5}=r^{-1}I_{5,\hat A}$ with the same reasoning. And if we see the matrix $A_6$ we get $I_{1,A_6}=r^{-1}I_{1,\hat A}$,$I_{2,A_6}=r^{-1}I_{2,\hat A}$,$I_{3,A_6}=r^{-1}I_{3,\hat A}$,$I_{4,A_6}=r^{-1}I_{4,\hat A}$ and $I_{5,A_6}=r^{-2}I_{5,\hat A}$. Thus, we prove equation~(\ref{E:eq4}). As a matter of facts we obtain from equation~(\ref{E:parcial}) that
\begin{align}
    I(\underline{J},\underline{B})&=\prod_{\begin{array}{c}p=1\\o_p\notin O_1 \end{array}}^{\binom{N+1}{2}}(1+r^{-1}X_p)^3\times\nonumber
    \end{align}
\begin{align}
\hspace{3cm}
    &\times\left[\sum_{\begin{array}{c}A\in \mathcal{A}_{\binom{N+1}{2}}\\n(p_4)=0\mbox{ if }o_p\notin O_3\\n(p)=0\end{array}}I_A\prod_{\begin{array}{c}p=1\\o_p\in O_3\\p\neq p_4\end{array}}^{\binom{N+1}{2}}X_{p}^{n(p)}\right.+\nonumber
\end{align}
\begin{align}
\hspace{3cm}
    &+\sum_{\begin{array}{c}A\in \mathcal{A}_{\binom{N+1}{2}}\\n(p_4)=1\mbox{ if }o_p\notin O_3\\n(p)=0\end{array}}I_AX_{p_4}\prod_{\begin{array}{c}p=1\\o_p\in O_3\\p\neq p_4\end{array}}^{\binom{N+1}{2}}X_{p}^{n(p)}+\nonumber
    \end{align}
\begin{align}
\hspace{3cm}
    &+\sum_{\begin{array}{c}A\in \mathcal{A}_{\binom{N+1}{2}}\\n(p_4)=2\mbox{ if }o_p\notin O_3\\n(p)=0\end{array}}I_AX_{p_4}^2\prod_{\begin{array}{c}p=1\\o_p\in O_3\\p\neq p_4\end{array}}^{\binom{N+1}{2}}X_{p}^{n(p)}+\nonumber
    \end{align}
\begin{align}
\hspace{3cm}
    &+\left.\sum_{\begin{array}{c}A\in \mathcal{A}_{\binom{N+1}{2}}\\n(p_4)=3\mbox{ if }o_p\notin O_3\\n(p)=0\end{array}}I_AX_{p_4}^3\prod_{\begin{array}{c}p=1\\o_p\in O_3\\p\neq p_4\end{array}}^{\binom{N+1}{2}}X_{p}^{n(p)}\right]\nonumber
\end{align}
By the remarks above and equation~(\ref{E:eq3}) and~(\ref{E:eq4}) we conclude that
\begin{align}
    I(\underline{J},\underline{B})&=\prod_{\begin{array}{c}p=1\\o_p\notin O_1 \end{array}}^{\binom{N+1}{2}}(1+r^{-1}X_p)^3\times\nonumber\\&\times
    (1+(1+2r^{-1})X_{p_4}+(2r^{-1}+r^{-2})X_{p_4}^2+r^{-2}X_{p_4}^3)\times
    \nonumber\\&\times\sum_{\begin{array}{c}A\in \mathcal{A}_{\binom{N+1}{2}}\\n(p_4)=0\mbox{ if }o_p\notin O_3\\n(p)=0\end{array}}I_A\prod_{\begin{array}{c}p=1\\p\neq p_4\,o_p\in O_3\end{array}}^{\binom{N+1}{2}}X_{p}^{n(p)}.
\end{align}
We can repeat this last procedure for the other two elements of $O_3$ which ends the proof.
\qed
\end{proof}
\section{Some coefficients and the main theorem}
Now, We will evaluate the constants $I_A$ for $A\in\mathcal{A}_{\binom{N+1}{2}}$ with $n(p)=0$ if $o_p\notin O_2$. These matrices can have entries different from zero only in three rows, we will represent these matrices by matrices $3\times 3$ where the first row is associated to the pair $(1,2)$ the second one to the pair $(1,3)$ and the third one to $(2,3)$. We also define the coefficients $$\alpha(x,y,z)=\sum_{\begin{array}{c}A\in\mathcal{A}_{\binom{N+1}{2}}\\n(p_1)=x,n(p_2)=y\\n(p_3)=z,\mbox{ for the other } p\\n(p)=0\end{array}}I_A$$

Using the fact that if $\sigma_1=\sigma_2$ and $\sigma_1=\sigma_3$ implies that $\sigma_2=\sigma_3$ and the symmetries, we get
\begin{align}
\alpha(3,3,3)&=\alpha(3,0,3)=\alpha(0,3,3)=\alpha(3,3,0)=I_{\left(\begin{smallmatrix}1&1&1\\1&1&1\\1&1&1\end{smallmatrix}\right)}=\nonumber\\
&=r^{n-1}r^{n-1}r^{n-2}-r^{n-1}r^{n-2}r^{n-2}-r^{n-1}r^{n-2}r^{n-2}-\nonumber\\&-r^{n-1}r^{n-2}r^{n-2}
+2r^{n-2}r^{n-2}r^{n-2}=r^{3n-6}(r^2-3r+2).\nonumber
\end{align}
We observe by the last remark, we have
\begin{align}
\alpha(3,2,3)&=\alpha(2,3,3)=\alpha(3,3,2)=\alpha(3,1,3)=\alpha(1,3,3)=\alpha(3,3,1)=\nonumber\\
&=I_{\left(\begin{smallmatrix}1&1&1\\1&1&1\\1&0&0\end{smallmatrix}\right)}+I_{\left(\begin{smallmatrix}1&1&1\\1&1&1\\0&1&0\end{smallmatrix}\right)}+I_{\left(\begin{smallmatrix}1&1&1\\1&1&1\\0&0&1\end{smallmatrix}\right)}=\nonumber\\
&=3r^{3n-6}(r^2-3r+2)\nonumber
\end{align}
and
\begin{align}
&\alpha(3,2,2)=\alpha(2,3,2)=\alpha(2,2,3)=I_{\left(\begin{smallmatrix}1&1&1\\1&1&0\\1&1&0\end{smallmatrix}\right)}+I_{\left(\begin{smallmatrix}1&1&1\\1&1&0\\1&0&1\end{smallmatrix}\right)}+I_{\left(\begin{smallmatrix}1&1&1\\1&1&0\\0&1&1\end{smallmatrix}\right)}+\nonumber\\
&+I_{\left(\begin{smallmatrix}1&1&1\\1&0&1\\1&1&0\end{smallmatrix}\right)}+I_{\left(\begin{smallmatrix}1&1&1\\1&0&1\\1&0&1\end{smallmatrix}\right)}+I_{\left(\begin{smallmatrix}1&1&1\\1&0&1\\0&1&1\end{smallmatrix}\right)}+I_{\left(\begin{smallmatrix}1&1&1\\0&1&1\\1&1&0\end{smallmatrix}\right)}+I_{\left(\begin{smallmatrix}1&1&1\\0&1&1\\1&0&1\end{smallmatrix}\right)}+I_{\left(\begin{smallmatrix}1&1&1\\0&1&1\\0&1&1\end{smallmatrix}\right)}=\nonumber\\
&=r^{3n-5}(-2r+1)+r^{3n-6}(r^2-3r+2)+r^{3n-6}(r^2-3r+2)+\nonumber\\&+r^{3n-6}(r^2-3r+2)+r^{3n-4}(r-1)+r^{3n-6}(r^2-3r+2)+\nonumber\\&+r^{3n-6}(r^2-3r+2)+r^{3n-6}(r^2-3r+2)+r^{3n-5}(r^2-3r+2)=\nonumber\\&=r^{3n-6}(2r^{3}-15r+12).\nonumber
\end{align}
We also obtain
\begin{align}
&\alpha(3,2,1)=\alpha(2,3,1)=\alpha(3,1,2)=\alpha(1,3,2)=\alpha(1,2,3)=\alpha(2,1,3)=\nonumber\\&=I_{\left(\begin{smallmatrix}1&1&1\\1&1&0\\1&0&0\end{smallmatrix}\right)}+I_{\left(\begin{smallmatrix}1&1&1\\1&1&0\\0&1&0\end{smallmatrix}\right)}+I_{\left(\begin{smallmatrix}1&1&1\\1&1&0\\0&0&1\end{smallmatrix}\right)}+\nonumber\\
&+I_{\left(\begin{smallmatrix}1&1&1\\1&0&1\\1&0&0\end{smallmatrix}\right)}+I_{\left(\begin{smallmatrix}1&1&1\\1&0&1\\0&1&0\end{smallmatrix}\right)}+I_{\left(\begin{smallmatrix}1&1&1\\1&0&1\\0&0&1\end{smallmatrix}\right)}+I_{\left(\begin{smallmatrix}1&1&1\\0&1&1\\1&0&0\end{smallmatrix}\right)}+I_{\left(\begin{smallmatrix}1&1&1\\0&1&1\\0&1&0\end{smallmatrix}\right)}+I_{\left(\begin{smallmatrix}1&1&1\\0&1&1\\0&0&1\end{smallmatrix}\right)}=\nonumber\\
&=r^{3n-5}(-2r+1)+r^{3n-5}(-2r+1)+r^{3n-6}(r^2-3r+2)+r^{3n-4}(r-1)+\nonumber\\&+r^{3n-6}(r^2-3r+2)+r^{3n-4}(r-1)+r^{3n-6}(r^2-3r+2)+\nonumber\\
&+r^{3n-5}(r^2-3r+2)+r^{3n-5}(r^2-3r+2)=r^{3n-6}(4r^{3}-9r^2-3r+6).\nonumber
\end{align}
For the element
\begin{align}
&\alpha(3,2,0)=\alpha(2,3,0)=\alpha(3,0,2)=\alpha(0,3,2)=\alpha(0,2,3)=\alpha(2,0,3)=\nonumber\\&=I_{\left(\begin{smallmatrix}1&1&1\\1&1&0\\0&0&0\end{smallmatrix}\right)}+I_{\left(\begin{smallmatrix}1&1&1\\1&0&1\\0&0&0\end{smallmatrix}\right)}+I_{\left(\begin{smallmatrix}1&1&1\\0&1&1\\0&0&0\end{smallmatrix}\right)}=\nonumber\\
&=r^{3n-5}(-2r+1)+r^{3n-4}(r-1)+r^{3n-5}(r^2-3r+2)=r^{3n-5}(2r^2-6r+3).\nonumber
\end{align}
Now, the element
\begin{align}
&\alpha(3,1,1)=\alpha(1,3,1)=\alpha(1,1,3)=I_{\left(\begin{smallmatrix}1&1&1\\1&0&0\\1&0&0\end{smallmatrix}\right)}+I_{\left(\begin{smallmatrix}1&1&1\\1&0&0\\0&1&0\end{smallmatrix}\right)}+I_{\left(\begin{smallmatrix}1&1&1\\1&0&0\\0&0&1\end{smallmatrix}\right)}+\nonumber\\
&+I_{\left(\begin{smallmatrix}1&1&1\\0&1&0\\1&0&0\end{smallmatrix}\right)}+I_{\left(\begin{smallmatrix}1&1&1\\0&1&0\\0&1&0\end{smallmatrix}\right)}+I_{\left(\begin{smallmatrix}1&1&1\\0&1&0\\0&0&1\end{smallmatrix}\right)}+I_{\left(\begin{smallmatrix}1&1&1\\0&0&1\\1&0&0\end{smallmatrix}\right)}+I_{\left(\begin{smallmatrix}1&1&1\\0&0&1\\0&1&0\end{smallmatrix}\right)}+I_{\left(\begin{smallmatrix}1&1&1\\0&0&1\\0&0&1\end{smallmatrix}\right)}=\nonumber\\
&=0+r^{3n-5}(-2r+1)+r^{3n-4}(r-1)+r^{3n-5}(-2r+1)+r^{3n-4}(-2r+2)+\nonumber\\&+r^{3n-5}(r^2-3r+2)+r^{3n-4}(r-1)+r^{3n-5}(r^2-3r+2)+r^{3n-3}(r-1)=\nonumber\\
&=r^{3n-5}(r^{3}+r^2-10r+6)\nonumber
\end{align}
and
\begin{align}
&\alpha(3,1,0)=\alpha(0,3,1)=\alpha(0,1,3)=\alpha(3,0,1)=\alpha(1,0,3)=\alpha(1,3,0)=\nonumber\\&=I_{\left(\begin{smallmatrix}1&1&1\\1&0&0\\0&0&0\end{smallmatrix}\right)}+I_{\left(\begin{smallmatrix}1&1&1\\0&1&0\\0&0&0\end{smallmatrix}\right)}+I_{\left(\begin{smallmatrix}1&1&1\\0&0&1\\0&0&0\end{smallmatrix}\right)}+\nonumber\\
&=0+r^{3n-4}(-2r+2)+r^{3n-3}(r-1)=\nonumber\\
&=r^{3n-4}(r^2-3r+2)\nonumber
\end{align}
also
\begin{align}
&\alpha(3,0,0)=\alpha(0,3,0)=\alpha(0,0,3)=I_{\left(\begin{smallmatrix}1&1&1\\0&0&0\\0&0&0\end{smallmatrix}\right)}=0.\nonumber
\end{align}
One of the longest terms is
\begin{align}
&\alpha(2,2,2)=I_{\left(\begin{smallmatrix}1&1&0\\1&1&0\\1&1&0\end{smallmatrix}\right)}+I_{\left(\begin{smallmatrix}1&1&0\\1&1&0\\1&0&1\end{smallmatrix}\right)}+I_{\left(\begin{smallmatrix}1&1&0\\1&1&0\\0&1&1\end{smallmatrix}\right)}+\nonumber\\
&+I_{\left(\begin{smallmatrix}1&1&0\\1&0&1\\1&1&0\end{smallmatrix}\right)}+I_{\left(\begin{smallmatrix}1&1&0\\1&0&1\\1&0&1\end{smallmatrix}\right)}+I_{\left(\begin{smallmatrix}1&1&0\\1&0&1\\0&1&1\end{smallmatrix}\right)}+I_{\left(\begin{smallmatrix}1&1&0\\0&1&1\\1&1&0\end{smallmatrix}\right)}+I_{\left(\begin{smallmatrix}1&1&0\\0&1&1\\1&0&1\end{smallmatrix}\right)}+I_{\left(\begin{smallmatrix}1&1&0\\0&1&1\\0&1&1\end{smallmatrix}\right)}+\nonumber\\
&+I_{\left(\begin{smallmatrix}1&0&1\\1&1&0\\1&1&0\end{smallmatrix}\right)}+I_{\left(\begin{smallmatrix}1&0&1\\1&1&0\\1&0&1\end{smallmatrix}\right)}+I_{\left(\begin{smallmatrix}1&0&1\\1&1&0\\0&1&1\end{smallmatrix}\right)}
+I_{\left(\begin{smallmatrix}1&0&1\\1&0&1\\1&1&0\end{smallmatrix}\right)}+I_{\left(\begin{smallmatrix}1&0&1\\1&0&1\\1&0&1\end{smallmatrix}\right)}+I_{\left(\begin{smallmatrix}1&0&1\\1&0&1\\0&1&1\end{smallmatrix}\right)}+\nonumber\\&+I_{\left(\begin{smallmatrix}1&0&1\\0&1&1\\1&1&0\end{smallmatrix}\right)}+I_{\left(\begin{smallmatrix}1&0&1\\0&1&1\\1&0&1\end{smallmatrix}\right)}+I_{\left(\begin{smallmatrix}1&0&1\\0&1&1\\0&1&1\end{smallmatrix}\right)}
+I_{\left(\begin{smallmatrix}0&1&1\\1&1&0\\1&1&0\end{smallmatrix}\right)}+I_{\left(\begin{smallmatrix}0&1&1\\1&1&0\\1&0&1\end{smallmatrix}\right)}+I_{\left(\begin{smallmatrix}0&1&1\\1&1&0\\0&1&1\end{smallmatrix}\right)}+\nonumber\\
&+I_{\left(\begin{smallmatrix}0&1&1\\1&0&1\\1&1&0\end{smallmatrix}\right)}+I_{\left(\begin{smallmatrix}0&1&1\\1&0&1\\1&0&1\end{smallmatrix}\right)}+I_{\left(\begin{smallmatrix}0&1&1\\1&0&1\\0&1&1\end{smallmatrix}\right)}+I_{\left(\begin{smallmatrix}0&1&1\\0&1&1\\1&1&0\end{smallmatrix}\right)}+I_{\left(\begin{smallmatrix}0&1&1\\0&1&1\\1&0&1\end{smallmatrix}\right)}+I_{\left(\begin{smallmatrix}0&1&1\\0&1&1\\0&1&1\end{smallmatrix}\right)}=\nonumber
\end{align}
\begin{align}
\hspace{1cm}
&=r^{3n-4}(-3r+3)+r^{3n-5}(-2r+2)+r^{3n-5}(-2r+2)+r^{3n-5}(-2r+2)+\nonumber\\
&+r^{3n-4}(r-1)+r^{3n-6}(r^2-3r+2)+r^{3n-5}(-2r+2)+r^{3n-6}(r^2-3r+2)+\nonumber\\
&+r^{3n-5}(r^2-3r+2)+r^{3n-5}(-2r+2)+r^{3n-4}(r-1)+r^{3n-6}(r^2-3r+2)+\nonumber\\
&+r^{3n-4}(r-1)+r^{3n-5}(r^3-3r+2)+r^{3n-4}(r-1)+r^{3n-6}(r^2-3r+2)+\nonumber\\
&+r^{3n-4}(r-1)+r^{3n-5}(r^2-3r+2)+r^{3n-5}(-2r+2)+\nonumber\\
&+r^{3n-6}(r^2-3r+2)+r^{3n-5}(r^2-3r+2)+r^{3n-6}(r^2-3r+2)+\nonumber\\
&+r^{3n-4}(r-1)+r^{3n-5}(r^2-3r+2)+r^{3n-5}(r^2-3r+2)+\nonumber\\
&+r^{3n-5}(r^2-3r+2)+r^{3n-4}(r^2-3r+2)=\nonumber\\
&=r^{3n-6}(2r^4+6r^3-28r^2+8r+12)\nonumber
\end{align}
and
\begin{align}
&\alpha(2,2,1)=\alpha(2,1,2)=\alpha(1,2,2)=I_{\left(\begin{smallmatrix}1&1&0\\1&1&0\\1&0&0\end{smallmatrix}\right)}+I_{\left(\begin{smallmatrix}1&1&0\\1&1&0\\0&1&0\end{smallmatrix}\right)}+I_{\left(\begin{smallmatrix}1&1&0\\1&1&0\\0&0&1\end{smallmatrix}\right)}+\nonumber\\
&+I_{\left(\begin{smallmatrix}1&1&0\\1&0&1\\1&0&0\end{smallmatrix}\right)}+I_{\left(\begin{smallmatrix}1&1&0\\1&0&1\\0&1&0\end{smallmatrix}\right)}+I_{\left(\begin{smallmatrix}1&1&0\\1&0&1\\0&0&1\end{smallmatrix}\right)}+I_{\left(\begin{smallmatrix}1&1&0\\0&1&1\\1&0&0\end{smallmatrix}\right)}+I_{\left(\begin{smallmatrix}1&1&0\\0&1&1\\0&1&0\end{smallmatrix}\right)}+I_{\left(\begin{smallmatrix}1&1&0\\0&1&1\\0&0&1\end{smallmatrix}\right)}+\nonumber\\
&+I_{\left(\begin{smallmatrix}1&0&1\\1&1&0\\1&0&0\end{smallmatrix}\right)}+I_{\left(\begin{smallmatrix}1&0&1\\1&1&0\\0&1&0\end{smallmatrix}\right)}+I_{\left(\begin{smallmatrix}1&0&1\\1&1&0\\0&0&1\end{smallmatrix}\right)}
+I_{\left(\begin{smallmatrix}1&0&1\\1&0&1\\1&0&0\end{smallmatrix}\right)}+I_{\left(\begin{smallmatrix}1&0&1\\1&0&1\\0&1&0\end{smallmatrix}\right)}+I_{\left(\begin{smallmatrix}1&0&1\\1&0&1\\0&0&1\end{smallmatrix}\right)}+\nonumber\\&+I_{\left(\begin{smallmatrix}1&0&1\\0&1&1\\1&0&0\end{smallmatrix}\right)}+I_{\left(\begin{smallmatrix}1&0&1\\0&1&1\\0&1&0\end{smallmatrix}\right)}+I_{\left(\begin{smallmatrix}1&0&1\\0&1&1\\0&0&1\end{smallmatrix}\right)}
+I_{\left(\begin{smallmatrix}0&1&1\\1&1&0\\1&0&0\end{smallmatrix}\right)}+I_{\left(\begin{smallmatrix}0&1&1\\1&1&0\\0&1&0\end{smallmatrix}\right)}+I_{\left(\begin{smallmatrix}0&1&1\\1&1&0\\0&0&1\end{smallmatrix}\right)}+\nonumber\\
&+I_{\left(\begin{smallmatrix}0&1&1\\1&0&1\\1&0&0\end{smallmatrix}\right)}+I_{\left(\begin{smallmatrix}0&1&1\\1&0&1\\0&1&0\end{smallmatrix}\right)}+I_{\left(\begin{smallmatrix}0&1&1\\1&0&1\\0&0&1\end{smallmatrix}\right)}+I_{\left(\begin{smallmatrix}0&1&1\\0&1&1\\1&0&0\end{smallmatrix}\right)}+I_{\left(\begin{smallmatrix}0&1&1\\0&1&1\\0&1&0\end{smallmatrix}\right)}+I_{\left(\begin{smallmatrix}0&1&1\\0&1&1\\0&0&1\end{smallmatrix}\right)}=\nonumber
\end{align}
\begin{align}
&=r^{3n-4}(-3r+3)+r^{3n-4}(-3r+3)+r^{3n-5}(-2r+2)+0+\nonumber\\
&+r^{3n-5}(-2r+2)+r^{3n-4}(r-1)+r^{3n-5}(-2r+2)+r^{3n-4}(-2r+2)+\nonumber\\
&+r^{3n-5}(r^2-3r+2)+0+r^{3n-5}(-2r+2)+r^{3n-4}(r-1)+\nonumber\\
&+r^{3n-5}(r^3-3r+2)+r^{3n-4}(r-1)+r^{3n-5}(r^3-3r+2)+r^{3n-4}(r-1)+\nonumber\\
&+r^{3n-5}(r^2-3r+2)+r^{3n-3}(r-1)+r^{3n-5}(-2r+2)+r^{3n-4}(-2r+2)+\nonumber
\end{align}
\begin{align}
\hspace{1cm}
&+r^{3n-5}(r^2-3r+2)+r^{3n-4}(r-1)+r^{3n-5}(r^2-3r+2)+r^{3n-3}(r-1)+\nonumber\\
&+r^{3n-5}(r^2-3r+2)+r^{3n-4}(r^2-3r+2)+r^{3n-4}(r^2-3r+2)=\nonumber\\
&=r^{3n-5}(5r^3-7r^2-22r+24).\nonumber
\end{align}
Now, the element
\begin{align}
&\alpha(2,2,0)=\alpha(2,0,2)=\alpha(0,2,2)=I_{\left(\begin{smallmatrix}1&1&0\\1&1&0\\0&0&0\end{smallmatrix}\right)}+I_{\left(\begin{smallmatrix}1&1&0\\1&0&1\\0&0&0\end{smallmatrix}\right)}
+I_{\left(\begin{smallmatrix}1&1&0\\0&1&1\\0&0&0\end{smallmatrix}\right)}+\nonumber\\
&+I_{\left(\begin{smallmatrix}1&0&1\\1&1&0\\0&0&0\end{smallmatrix}\right)}+I_{\left(\begin{smallmatrix}1&0&1\\1&0&1\\0&0&0\end{smallmatrix}\right)}+
I_{\left(\begin{smallmatrix}1&0&1\\0&1&1\\0&0&0\end{smallmatrix}\right)}+I_{\left(\begin{smallmatrix}0&1&1\\1&1&0\\0&0&0\end{smallmatrix}\right)}+I_{\left(\begin{smallmatrix}0&1&1\\1&0&1\\0&0&0\end{smallmatrix}\right)}+I_{\left(\begin{smallmatrix}0&1&1\\0&1&1\\0&0&0\end{smallmatrix}\right)}\nonumber\\
&=r^{3n-4}(-3r+3)+0+r^{3n-4}(-2r+2)+0+r^{3n-5}(r^3-3r+2)\nonumber\\
&+r^{3n-3}(r-1)+r^{3n-4}(-2r+2)+r^{3n-3}(r-1)+r^{3n-4}(r^2-3r+2)\nonumber\\
&=r^{3n-5}(4r^3-12r^2+6r+2)\nonumber
\end{align}
and
\begin{align}
&\alpha(2,1,1)=\alpha(1,1,2)=\alpha(1,2,1)=I_{\left(\begin{smallmatrix}1&1&0\\1&0&0\\1&0&0\end{smallmatrix}\right)}+I_{\left(\begin{smallmatrix}1&1&0\\1&0&0\\0&1&0\end{smallmatrix}\right)}+I_{\left(\begin{smallmatrix}1&1&0\\1&0&0\\0&0&1\end{smallmatrix}\right)}+\nonumber\\
&+I_{\left(\begin{smallmatrix}1&1&0\\0&1&0\\1&0&0\end{smallmatrix}\right)}+I_{\left(\begin{smallmatrix}1&1&0\\0&1&0\\0&1&0\end{smallmatrix}\right)}+I_{\left(\begin{smallmatrix}1&1&0\\0&1&0\\0&0&1\end{smallmatrix}\right)}+I_{\left(\begin{smallmatrix}1&1&0\\0&0&1\\1&0&0\end{smallmatrix}\right)}+I_{\left(\begin{smallmatrix}1&1&0\\0&0&1\\0&1&0\end{smallmatrix}\right)}+I_{\left(\begin{smallmatrix}1&1&0\\0&0&1\\0&0&1\end{smallmatrix}\right)}+\nonumber\\
&+I_{\left(\begin{smallmatrix}1&0&1\\1&0&0\\1&0&0\end{smallmatrix}\right)}+I_{\left(\begin{smallmatrix}1&0&1\\1&0&0\\0&1&0\end{smallmatrix}\right)}+I_{\left(\begin{smallmatrix}1&0&1\\1&0&0\\0&0&1\end{smallmatrix}\right)}
+I_{\left(\begin{smallmatrix}1&0&1\\0&1&0\\1&0&0\end{smallmatrix}\right)}+I_{\left(\begin{smallmatrix}1&0&1\\0&1&0\\0&1&0\end{smallmatrix}\right)}+I_{\left(\begin{smallmatrix}1&0&1\\0&1&0\\0&0&1\end{smallmatrix}\right)}+\nonumber\\
&+I_{\left(\begin{smallmatrix}1&0&1\\0&0&1\\1&0&0\end{smallmatrix}\right)}+I_{\left(\begin{smallmatrix}1&0&1\\0&0&1\\0&1&0\end{smallmatrix}\right)}+I_{\left(\begin{smallmatrix}1&0&1\\0&0&1\\0&0&1\end{smallmatrix}\right)}
+I_{\left(\begin{smallmatrix}0&1&1\\1&0&0\\1&0&0\end{smallmatrix}\right)}+I_{\left(\begin{smallmatrix}0&1&1\\1&0&0\\0&1&0\end{smallmatrix}\right)}+I_{\left(\begin{smallmatrix}0&1&1\\1&0&0\\0&0&1\end{smallmatrix}\right)}+\nonumber\\
&+I_{\left(\begin{smallmatrix}0&1&1\\0&1&0\\1&0&0\end{smallmatrix}\right)}+I_{\left(\begin{smallmatrix}0&1&1\\0&1&0\\0&1&0\end{smallmatrix}\right)}+I_{\left(\begin{smallmatrix}0&1&1\\0&1&0\\0&0&1\end{smallmatrix}\right)}+I_{\left(\begin{smallmatrix}0&1&1\\0&0&1\\1&0&0\end{smallmatrix}\right)}+I_{\left(\begin{smallmatrix}0&1&1\\0&0&1\\0&1&0\end{smallmatrix}\right)}+I_{\left(\begin{smallmatrix}0&1&1\\0&0&1\\0&0&1\end{smallmatrix}\right)}\nonumber\\
&=r^{3n-3}(-r+1)+r^{3n-4}(-3r+3)+0+r^{3n-4}(-3r+3)+r^{3n-3}(-3r+3)+\nonumber\\
&+r^{3n-4}(-2r+2)+0+r^{3n-4}(-2r+2)+r^{3n-3}(r-1)+r^{3n-3}(r-1)+\nonumber\\
&+0+r^{3n-5}(r^3-3r+2)+0+r^{3n-4}(-2r+2)+r^{3n-3}(r-1)+\nonumber\\
&+r^{3n-5}(r^3-3r+2)+r^{3n-3}(r-1)+r^{3n-3}(r^2-1)+0+r^{3n-4}(-2r+2)\nonumber\\
&+r^{3n-3}(r-1)+r^{3n-4}(-2r+2)+r^{3n-3}(-2r+2)+r^{3n-4}(r^2-3r+2)+\nonumber\\
&+r^{3n-3}(r-1)
+r^{3n-4}(r^2-3r+2)+r^{3n-2}(r-1)=\nonumber\\&=r^{3n-5}(2r^4+3r^3-23r^2+14r+4).\nonumber
\end{align}
We also obtain
\begin{align}
&\alpha(2,1,0)=\alpha(0,1,2)=\alpha(0,2,1)=\alpha(2,0,1)=\alpha(1,0,2)=\alpha(1,2,0)=\nonumber\\&=I_{\left(\begin{smallmatrix}1&1&0\\1&0&0\\0&0&0\end{smallmatrix}\right)}+
+I_{\left(\begin{smallmatrix}1&1&0\\0&1&0\\0&0&0\end{smallmatrix}\right)}+I_{\left(\begin{smallmatrix}1&1&0\\0&0&1\\0&0&0\end{smallmatrix}\right)}+
+I_{\left(\begin{smallmatrix}1&0&1\\1&0&0\\0&0&0\end{smallmatrix}\right)}+I_{\left(\begin{smallmatrix}1&0&1\\0&1&0\\0&0&0\end{smallmatrix}\right)}+
+I_{\left(\begin{smallmatrix}1&0&1\\0&0&1\\0&0&0\end{smallmatrix}\right)}+\nonumber\\&+I_{\left(\begin{smallmatrix}0&1&1\\1&0&0\\0&0&0\end{smallmatrix}\right)}+I_{\left(\begin{smallmatrix}0&1&1\\0&1&0\\0&0&0\end{smallmatrix}\right)}
+I_{\left(\begin{smallmatrix}0&1&1\\0&0&1\\0&0&0\end{smallmatrix}\right)}=
=r^{3n-3}(-r+1)+r^{3n-3}(-3r+3)+0+\nonumber\\
&+r^{3n-3}(r-1)+0+r^{3n-3}(r^2-1)+0+r^{3n-3}(-2r+2)+r^{3n-2}(r-1)=\nonumber\\
&=r^{3n-3}(2r^2-6r+4).\nonumber
\end{align}
Here, we get
\begin{align}
&\alpha(2,0,0)=\alpha(0,0,2)=\alpha(2,0,0)=I_{\left(\begin{smallmatrix}1&1&0\\0&0&0\\0&0&0\end{smallmatrix}\right)}+
I_{\left(\begin{smallmatrix}1&0&1\\0&0&0\\0&0&0\end{smallmatrix}\right)}+I_{\left(\begin{smallmatrix}0&1&1\\0&0&0\\0&0&0\end{smallmatrix}\right)}=\nonumber\\
&=r^{3n-2}(-r+1)+r^{3n-2}(r-1)+0=0\nonumber
\end{align}
and we can also get

\begin{align}
&\alpha(1,1,1)=I_{\left(\begin{smallmatrix}1&0&0\\1&0&0\\1&0&0\end{smallmatrix}\right)}+I_{\left(\begin{smallmatrix}1&0&0\\1&0&0\\0&1&0\end{smallmatrix}\right)}+I_{\left(\begin{smallmatrix}1&0&0\\1&0&0\\0&0&1\end{smallmatrix}\right)}+\nonumber\\
&+I_{\left(\begin{smallmatrix}1&0&0\\0&1&0\\1&0&0\end{smallmatrix}\right)}+I_{\left(\begin{smallmatrix}1&0&0\\0&1&0\\0&1&0\end{smallmatrix}\right)}+I_{\left(\begin{smallmatrix}1&0&0\\0&1&0\\0&0&1\end{smallmatrix}\right)}+I_{\left(\begin{smallmatrix}1&0&0\\0&0&1\\1&0&0\end{smallmatrix}\right)}+I_{\left(\begin{smallmatrix}1&0&0\\0&0&1\\0&1&0\end{smallmatrix}\right)}+I_{\left(\begin{smallmatrix}1&0&0\\0&0&1\\0&0&1\end{smallmatrix}\right)}+\nonumber\\
&+I_{\left(\begin{smallmatrix}0&1&0\\1&0&0\\1&0&0\end{smallmatrix}\right)}+I_{\left(\begin{smallmatrix}0&1&0\\1&0&0\\0&1&0\end{smallmatrix}\right)}+I_{\left(\begin{smallmatrix}0&1&0\\1&0&0\\0&0&1\end{smallmatrix}\right)}
+I_{\left(\begin{smallmatrix}0&1&0\\0&1&0\\1&0&0\end{smallmatrix}\right)}+I_{\left(\begin{smallmatrix}0&1&0\\0&1&0\\0&1&0\end{smallmatrix}\right)}+I_{\left(\begin{smallmatrix}0&1&0\\0&1&0\\0&0&1\end{smallmatrix}\right)}+\nonumber\\
&+I_{\left(\begin{smallmatrix}0&1&0\\0&0&1\\1&0&0\end{smallmatrix}\right)}+I_{\left(\begin{smallmatrix}0&1&0\\0&0&1\\0&1&0\end{smallmatrix}\right)}+I_{\left(\begin{smallmatrix}0&1&0\\0&0&1\\0&0&1\end{smallmatrix}\right)}
+I_{\left(\begin{smallmatrix}0&0&1\\1&0&0\\1&0&0\end{smallmatrix}\right)}+I_{\left(\begin{smallmatrix}0&0&1\\1&0&0\\0&1&0\end{smallmatrix}\right)}+I_{\left(\begin{smallmatrix}0&0&1\\1&0&0\\0&0&1\end{smallmatrix}\right)}+\nonumber\\
&+I_{\left(\begin{smallmatrix}0&0&1\\0&1&0\\1&0&0\end{smallmatrix}\right)}+I_{\left(\begin{smallmatrix}0&0&1\\0&1&0\\0&1&0\end{smallmatrix}\right)}+I_{\left(\begin{smallmatrix}0&0&1\\0&1&0\\0&0&1\end{smallmatrix}\right)}+I_{\left(\begin{smallmatrix}0&0&1\\0&0&1\\1&0&0\end{smallmatrix}\right)}+I_{\left(\begin{smallmatrix}0&0&1\\0&0&1\\0&1&0\end{smallmatrix}\right)}+I_{\left(\begin{smallmatrix}0&0&1\\0&0&1\\0&0&1\end{smallmatrix}\right)}=\nonumber\\
&=0+r^{3n-3}(-r+1)+r^{3n-3}(r-1)+r^{3n-3}(-r+1)+r^{3n-3}(-3r+3)\nonumber\\
&+0+r^{3n-3}(r-1)+0+r^{3n-3}(r^2-1)+r^{3n-3}(-r+1)+\nonumber\\
&+r^{3n-3}(-3r+3)+0+r^{3n-3}(-3r+3)+r^{3n-2}(-3r+3)+r^{3n-3}(-2r+2)\nonumber\\
&+0+r^{3n-3}(-2r+2)+r^{3n-2}(r-1)+r^{3n-3}(r-1)+0+\nonumber\\
&+r^{3n-3}(r^2-1)+0+r^{3n-3}(-2r+2)+r^{3n-2}(r-1)+r^{3n-3}(r^2-1)\nonumber\\
&+r^{3n-2}(r-1)+r^{3n-2}(r^2-1)=r^{3n-3}(r^3+3r^2-16r+12).\nonumber
\end{align}
For the element
\begin{align}
&\alpha(1,1,0)=\alpha(1,0,1)=\alpha(0,1,1)=I_{\left(\begin{smallmatrix}1&0&0\\1&0&0\\0&0&0\end{smallmatrix}\right)}+I_{\left(\begin{smallmatrix}1&0&0\\0&1&0\\0&0&0\end{smallmatrix}\right)}+I_{\left(\begin{smallmatrix}1&0&0\\0&0&1\\0&0&0\end{smallmatrix}\right)}+\nonumber\\&
+I_{\left(\begin{smallmatrix}0&1&0\\1&0&0\\0&0&0\end{smallmatrix}\right)}+I_{\left(\begin{smallmatrix}0&1&0\\0&1&0\\0&0&0\end{smallmatrix}\right)}+I_{\left(\begin{smallmatrix}0&1&0\\0&0&1\\0&0&0\end{smallmatrix}\right)}+I_{\left(\begin{smallmatrix}0&0&1\\1&0&0\\0&0&0\end{smallmatrix}\right)}+I_{\left(\begin{smallmatrix}0&0&1\\0&1&0\\0&0&0\end{smallmatrix}\right)}
+I_{\left(\begin{smallmatrix}0&0&1\\0&0&1\\0&0&0\end{smallmatrix}\right)}=\nonumber\\
&=0+r^{3n-2}(-r+1)+r^{3n-2}(r-1)+r^{3n-2}(-r+1)+r^{3n-2}(-3r+3)\nonumber\\
&=0+r^{3n-2}(r-1)+0+r^{3n-2}(r^2-1)=\nonumber\\
&=r^{3n-2}(r^2-3r+2)\nonumber
\end{align}
and
\begin{align}
&\alpha(1,0,0)=\alpha(0,0,1)=\alpha(0,1,0)=I_{\left(\begin{smallmatrix}1&0&0\\0&0&0\\0&0&0\end{smallmatrix}\right)}+I_{\left(\begin{smallmatrix}0&1&0\\0&0&0\\0&0&0\end{smallmatrix}\right)}+I_{\left(\begin{smallmatrix}0&0&1\\0&0&0\\0&0&0\end{smallmatrix}\right)}=\nonumber\\
&=0+r^{3n-2}(-r+1)+r^{3n-1}(-r+1)+r^{3n-1}(r-1)=0.\nonumber
\end{align}
Finally, we have
\begin{align}
&\alpha(0,0,0)=I_{\left(\begin{smallmatrix}0&0&0\\0&0&0\\0&0&0\end{smallmatrix}\right)}=0\nonumber
\end{align}
At this moment we can prove the theorem~\ref{T:convexity} :
\begin{proof}(Theorem GHS inequality for the Potts model)
The study of the signal of the quantity $\frac{\partial^2 m_i(\underline{J},\underline{B})}{\partial B_j\partial B_k}$ is related to the signal of $I(\underline{J},\underline{B})$ (see equation (\ref{E:def_I})). By the separation formula the signal of $I(\underline{J},\underline{B})$ is defined by the signal of the polynomial
\begin{align}
&\sum_{\begin{array}{c}A\in \mathcal{A}_{\binom{N+1}{2}}\\\mbox{if }o_p\notin O_2\\a(p,1)=a(p,2)=a(p,3)=0\end{array}}I_AX_{p_1}^{n(p_1)}X_{p_2}^{n(p_2)}X_{p_3}^{n(p_3)}=\nonumber\\&=\sum_{n(p_1),n(p_2),n(p_3)}\alpha(n(p_1),n(p_2),n(p_3))X_{p_1}^{n(p_1)}X_{p_2}^{n(p_2)}X_{p_3}^{n(p_3)}
\end{align}
But we can easily verify that the coefficients $\alpha(n(p_1),n(p_2),n(p_3))$ are all of them non positive if $r=2$ and are all of them non-negative if $r\geq 3$.
\qed
\end{proof}

\end{document}